\newtheorem{thm}{Theorem}[section]
\newtheorem{lem}[thm]{Lemma}
\newtheorem{cor}[thm]{Corollary}
\newtheorem{prop}[thm]{Proposition}
\newtheorem{ex}[thm]{Example}
\newtheorem*{prob*}{Open problem}
\theoremstyle{definition}
\newtheorem{defi}[thm]{Definition}
\theoremstyle{remark}
\newtheorem{rem}[thm]{Remark}
\newtheorem*{rem*}{Remark}
\DeclareMathOperator{\id}{id}
\DeclareMathOperator{\s}{span}
\newcommand{\kringel}{\mathbin{\raise1pt\hbox{$\scriptstyle\circ$}}} 
\newcommand{\pkt}{\mathbin{\raise0pt\hbox{$\scriptstyle\bullet$}}}
\newcommand{\C}{\mathbb{C}}
\newcommand{\ad}{\mathop{\rm ad}}
\newcommand{\End}{\mathop{\rm End}}
\newcommand{\Der}{\mathop{\rm Der}}
\newcommand{\La}{\mathfrak{a}}
\newcommand{\Lb}{\mathfrak{b}}
\newcommand{\Lg}{\mathfrak{g}}
\newcommand{\Lh}{\mathfrak{h}}
\newcommand{\Ll}{\mathfrak{l}}
\newcommand{\Lr}{\mathfrak{r}}
\newcommand{\Ls}{\mathfrak{s}}
\newcommand{\Lu}{\mathfrak{u}}
\newcommand{\al}{\alpha}
\newcommand{\la}{\lambda}
\newcommand{\om}{\omega}
\newcommand{\Om}{\Omega}
\newcommand{\ov}{\overline}
\newcommand{\ra}{\rightarrow}
\renewcommand{\phi}{\varphi}
\begin{document}

% Ab hier duerfen Sie wieder.

\title[Novikov algebras]{Classical r-matrices and Novikov algebras}
%  Die Kurzfassung kommt oben ueber die Seiten, sie steht in eckigen Klammern
%  Auch Autorennamen koennen eine Kurzfassung haben

\author[D. Burde]{Dietrich Burde}
\address{Fakult\"at f\"ur Mathematik\\
Universit\"at Wien\\
  Nordbergstr. 15\\
  1090 Wien \\
  Austria} 
\email{dietrich.burde@univie.ac.at}

\subjclass{Primary 17B30, 17D25}

\begin{abstract}
We study the existence problem for Novikov algebra structures on finite-dimensional Lie algebras.
We show that a Lie algebra admitting a Novikov algebra is necessarily solvable. Conversely we present a
$2$-step solvable Lie algebra without any Novikov structure. We use extensions and classical
$r$-matrices to construct Novikov structures on certain classes of solvable Lie algebras.
\end{abstract}

\maketitle

\section{Introduction}
A Novikov algebra is a special case of an LSA - a left-symmetric algebra.
It was introduced in the study of Hamiltonian operators concerning integrability 
of certain nonlinear partial differential equations \cite{GD}. It also appears in connection with Poisson
brackets of hydrodynamic type, operator Yang-Baxter equations \cite{BN} and vertex algebras \cite{FHL}. 
In particular, Novikov algebras bijectively correspond to a special class of Lie conformal
algebras. This shows the importance of Novikov algebras in theoretical physics. On the other hand
these algebras arise also in differential geometry: since an LSA is a Lie-admissible algebra the commutator 
defines a Lie algebra. The existence question asks which Lie algebras can arise that way.
This is a difficult question with a long history, see \cite{BU1}. It is related to affinely flat
manifolds and affine crystallographic groups. \\
We pose here the existence question for Novikov structures. This question is easier than the existence question
for LSA-structures. One reason is, that there are fewer Lie algebras admitting a Novikov
structure than Lie algebras admitting an LSA-structure. \\
In the first section we give some examples of Lie algebras admitting Novikov structures,
such as low-dimensional filiform Lie algebras. We show that a Lie algebra
admitting a Novikov structure has to be solvable. Then we give an example of a $2$-step 
solvable Lie algebra not admitting any Novikov structure.  
In the second section we construct Novikov structures via classical $r$-matrices.
The study of classical r-matrices was initiated in \cite{STS}. They arise also in the study
of differential Lie algebras and Poisson brackets.
In the third section we lift Novikov structures on an abelian Lie algebra $\La$ and on a
Lie algebra $\Lb$ to certain extensions of $\Lb$ by $\La$. This will be applied to prove the existence
of affine and Novikov structures on several classes of solvable and nilpotent Lie algebras.
For the proofs of some of the results we refer to our paper \cite{DBU}.

\section{Affine and Novikov structures}

An algebra $(A,\cdot)$ over $k$ with product $(x,y) \mapsto x\cdot y$
is called {\it left-symmetric algebra (LSA)}, if the product is
left-symmetric, i.e., if the identity
\begin{equation*}
x\cdot (y\cdot z)-(x\cdot y)\cdot z= y\cdot (x\cdot z)-(y\cdot x)\cdot z
\end{equation*}
is satisfied for all $x,y,z \in A$. The algebra is called {\it Novikov}, if
in addition
\begin{equation*}
(x\cdot y)\cdot z=(x\cdot z)\cdot y
\end{equation*}
is satisfied. We will always assume here that $k$ is a field of characteristic zero. 
Denote by $L(x), R(x)$ the right respectively left multiplication operator in
the algebra  $(A,\cdot)$. Then an LSA is a Novikov algebra if the right
multiplications commute:
\begin{align*}
[R(x),R(y)] & = 0.
\end{align*}

It is well known that LSAs are Lie-admissible algebras: the commutator
defines a Lie bracket. The associated Lie algebra then is said to admit
a left-symmetric structure, or affine structure.

\begin{defi}\label{affine}
An {\it affine structure} on a Lie algebra
$\Lg$ over $k$ is a left-symmetric product $\Lg \times \Lg \rightarrow \Lg$
satisfying 
\begin{equation}\label{lsa2}
[x,y]=x\cdot y -y\cdot x
\end{equation}
for all $x,y,z \in \Lg$. 
If the product is Novikov, we say that $\Lg$ admits a {\it Novikov structure}.
\end{defi}     
 
\begin{rem}
An affine structure on a Lie algebra $\Lg$ corresponds to a left-invariant
affine structure on a connected, simply connected Lie group $G$ with Lie algebra
$\Lg$. Such structures play an important role for affine crystallographic groups
and affine manifolds. In general, a Lie algebra need not admit an affine structure.
One may even find nilpotent Lie algebras which do not admit an affine structure, see \cite{BEN}, \cite{BU1}. 
On the other hand there are many classes of solvable and nilpotent Lie algebras which do
admit an affine structure: every positively graded Lie algebra and every $2$ and $3$-step 
nilpotent Lie algebra admits an affine structure, see Proposition $\ref{scheuneman}$.
\end{rem}

Let $\Lg$ be a Lie algebra. Denote the terms of the commutator series by 
$\Lg^{(1)}=\Lg$, $\Lg^{(i+1)}=[\Lg^{(i)},\Lg^{(i)}]$, and the terms of the lower central series 
by $\Lg^1=\Lg$, $\Lg^{i+1}=[\Lg,\Lg^{i}]$. 
A Lie algebra $\Lg$ is called  $p$-step solvable if $\Lg^{(p+1)}=0$. It is called $p$-step nilpotent, if $\Lg^{p+1}=0$.
A filiform nilpotent Lie algebra is a $p$-step nilpotent Lie algebra of dimension $n$ with $p=n-1$. \\[0.5cm]
In a Novikov algebra hold Jacobi-like identities:

\begin{lem}
Let $(A,\cdot)$ be a Novikov algebra. Then the following two identities hold for
all $x,y,z \in A$:
\begin{align*}
[x,y]\cdot z + [y,z]\cdot x + [z,x]\cdot y & = 0, \\
x \cdot [y,z] + y\cdot [z,x] + z\cdot [x,y] & = 0. \\
\end{align*}
\end{lem}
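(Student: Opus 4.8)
The plan is to derive the first (top) identity from the Novikov condition $(a\cdot b)\cdot c=(a\cdot c)\cdot b$ alone, and then to obtain the second identity from the left-symmetry axiom together with the first identity. Neither step needs the full strength of both axioms simultaneously, so I would treat them separately.

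First I would tackle the top identity. Expanding the commutators via $[x,y]=x\cdot y-y\cdot x$ turns the left-hand side into the six-term sum
\begin{equation*}
(x\cdot y)\cdot z-(y\cdot x)\cdot z+(y\cdot z)\cdot x-(z\cdot y)\cdot x+(z\cdot x)\cdot y-(x\cdot z)\cdot y.
\end{equation*}
Now I apply the Novikov relation $(a\cdot b)\cdot c=(a\cdot c)\cdot b$ to the three positive terms: it sends $(x\cdot y)\cdot z$ to $(x\cdot z)\cdot y$, sends $(y\cdot z)\cdot x$ to $(y\cdot x)\cdot z$, and sends $(z\cdot x)\cdot y$ to $(z\cdot y)\cdot x$. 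Each transformed term coincides with one of the three negative terms, so the six terms cancel in pairs and the sum vanishes. Thus the first identity rests on the Novikov condition only, not on left-symmetry.

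For the second identity I would first rewrite the left-symmetric axiom in the equivalent form $a\cdot(b\cdot c)-b\cdot(a\cdot c)=[a,b]\cdot c$, which is just the statement that the associator $(a\cdot b)\cdot c-a\cdot(b\cdot c)$ is symmetric in its first two arguments. Expanding $x\cdot[y,z]+y\cdot[z,x]+z\cdot[x,y]$ produces six terms of the shape $a\cdot(b\cdot c)$, which I would regroup into the three differences $x\cdot(y\cdot z)-y\cdot(x\cdot z)$, $y\cdot(z\cdot x)-z\cdot(y\cdot x)$ and $z\cdot(x\cdot y)-x\cdot(z\cdot y)$. By the rewritten axiom these equal $[x,y]\cdot z$, $[y,z]\cdot x$ and $[z,x]\cdot y$ respectively, so their sum is exactly the left-hand side of the first identity and hence is zero.

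Both steps are short symbolic manipulations, so I do not expect a genuine obstacle; the only point demanding care is choosing the correct pairing of the six expanded terms in the second identity, so that each difference matches the pattern $a\cdot(b\cdot c)-b\cdot(a\cdot c)$ with the cyclic labels $(x,y,z)$, $(y,z,x)$, $(z,x,y)$ in the right order. Once that bookkeeping is arranged, the reduction to the already-proved first identity is immediate.
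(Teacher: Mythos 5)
Your proof is correct and the treatment of the first identity is exactly the paper's argument: expand the commutators and cancel the six terms in pairs using the Novikov relation $(a\cdot b)\cdot c=(a\cdot c)\cdot b$. For the second identity the paper says only that it ``follows similarly,'' and your explicit route --- rewriting left-symmetry as $a\cdot(b\cdot c)-b\cdot(a\cdot c)=[a,b]\cdot c$ and thereby reducing the cyclic sum $x\cdot[y,z]+y\cdot[z,x]+z\cdot[x,y]$ to the already-proved first identity --- is a valid and indeed clarifying way to fill in that omission, since it also makes visible that the first identity uses only the Novikov condition while the second needs left-symmetry as well.
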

\begin{proof}
The first identity holds because we have
\begin{align*}
[x,y]\cdot z + [y,z]\cdot x + [z,x]\cdot y & =  (x\cdot y-y\cdot x)\cdot z+(y\cdot z-z\cdot y)\cdot x
+(z\cdot x-x\cdot z)\cdot y\\
 & = (x\cdot y)\cdot z + (y\cdot z)\cdot x + (z\cdot x)\cdot y \\
 & - (x\cdot z)\cdot y + (y\cdot x)\cdot z + (z\cdot y)\cdot x \\
 & = 0.
\end{align*}
The second identity follows similarly.
\end{proof}

The following result gives a necessary condition for the existence of Novikov structures.

\begin{prop}
Any finite-dimensional Lie algebra admitting a Novikov structure is solvable.
\end{prop}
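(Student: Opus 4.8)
The plan is to invoke Cartan's solvability criterion, after extracting from the Novikov axioms the one piece of non-formal information that is really needed: that right multiplication by an element of the derived algebra is a nilpotent operator.

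First I would record the relevant operator identities. Writing $L(x)y=x\cdot y$ and $R(x)y=y\cdot x$, left-symmetry gives $[L(x),L(y)]=L([x,y])$, so $L\colon\Lg\to\mathfrak{gl}(\Lg)$ is a Lie algebra homomorphism; its kernel $\{x:x\cdot\Lg=0\}$ is abelian (if $x\cdot\Lg=y\cdot\Lg=0$ then $[x,y]=x\cdot y-y\cdot x=0$) and is an ideal, hence harmless for solvability. Since $\ad(x)=L(x)-R(x)$ and $\ker\ad=Z(\Lg)$ is abelian, it suffices to prove that $\ad(\Lg)\subseteq\mathfrak{gl}(\Lg)$ is solvable. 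A short computation from left-symmetry yields $R(x\cdot y)=R(x)R(y)+[L(x),R(y)]$, and together with the Novikov condition $[R(x),R(y)]=0$ this gives $R([x,y])=[L(x),R(y)]-[L(y),R(x)]$.

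The key step is to show that $R(w)$ is nilpotent for every $w\in\Lg^{(2)}$. Because the right multiplications commute, $\tr([L(x),R(y)]M)=0$ whenever $M$ commutes with $R(y)$ (expand the commutator and use cyclicity of the trace). Applying this to the formula for $R([x,y])$, I obtain $\tr(R(w)M)=0$ for all $w\in\Lg^{(2)}$ and all $M$ in the commutative associative algebra $\CR\subseteq\End(\Lg)$ generated by the $R(x)$. Taking $M=R(w)^{k-1}$ gives $\tr(R(w)^k)=0$ for all $k\ge1$, whence $R(w)$ is nilpotent, as $\mathrm{char}\,k=0$. Granting this, the proof would conclude as follows: $[\ad(\Lg),\ad(\Lg)]=\ad(\Lg^{(2)})$, so if each $\ad(w)$ with $w\in\Lg^{(2)}$ is a nilpotent operator, then $[\ad(\Lg),\ad(\Lg)]$ consists of nilpotent endomorphisms and is therefore a nilpotent Lie algebra by Engel's theorem; hence $\ad(\Lg)$, and with it $\Lg$, is solvable.

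The main obstacle is precisely the passage from nilpotency of $R(w)$ to nilpotency of $\ad(w)=L(w)-R(w)$: the left multiplication $L(w)$ need not commute with $R(w)$, so a difference of two operators, only one of which is known to be nilpotent, must be controlled. To handle this I would extend scalars to $\ov{k}$ and simultaneously triangularize the commuting family $\{R(x)\}$, decomposing $\Lg$ into the generalized weight spaces of $\CR$; each weight is a linear functional on $\Lg$ that vanishes on $\Lg^{(2)}$ by the nilpotency just established. Using the identity $R(x\cdot y)=R(x)R(y)+[L(x),R(y)]$ to track how the $L(x)$ interact with this decomposition, one reduces to the nilpotent part on each weight space and applies Cartan's trace criterion there. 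This bookkeeping for the left multiplications is the technical heart of the argument; everything else is formal.
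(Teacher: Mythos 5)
Your opening computations are correct and worth keeping: left-symmetry gives $[L(x),L(y)]=L([x,y])$ and $R(x\cdot y)=R(x)R(y)+[L(x),R(y)]$ (the latter using commutativity of the right multiplications), hence $R([x,y])=[L(x),R(y)]-[L(y),R(x)]$, and your trace argument does yield $\tr(R(w)M)=0$ for every $w\in\Lg^{(2)}$ and every $M$ in the commutative algebra $\CR$ generated by the $R(x)$, so each such $R(w)$ is nilpotent. Note that this is already a different route from the paper, whose proof is structural rather than computational: it takes the largest right-nilpotent ideal $N(A)$, invokes Segal's theorem that a complete LSA has solvable Lie algebra, and Zelmanov's theorem that $A/N(A)$ is a direct sum of fields, and concludes from the resulting extension. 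A completed version of your argument would therefore be a genuinely self-contained alternative.

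However, your proposal stops exactly where the content of the theorem sits, so there is a genuine gap. As you yourself note, nilpotency of $R(w)$ is far from nilpotency of $\ad(w)=L(w)-R(w)$, and the final paragraph is a description of a difficulty, not an argument. Two things are missing. First, that each $L(x)$ preserves the generalized weight spaces of $\CR$ must be proved: it is true, because $[L(x),R(y)]=R(x\cdot y)-R(x)R(y)$ gives $[L(x),\CR]\subseteq\CR$, and an operator whose bracket with a commutative algebra lands in that algebra preserves its primary components --- but this is a lemma to state and prove, not bookkeeping. Second, and more seriously, after restricting to one weight space, the weight-zero case is a Novikov algebra all of whose right multiplications are nilpotent --- precisely the ``complete'' case for which the paper cites Segal; there, ``applying Cartan's trace criterion'' is empty as stated, since nothing you have established controls $\tr(L(x)L(w))$ or $\tr(\ad(x)\ad(w))$, which is what Cartan requires. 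Closing this case needs new structural input, for instance: the associative algebra $\CN$ generated by the commuting nilpotent $R(x)$ is nilpotent, the subspaces $V_j=\CN^j\Lg$ are two-sided ideals of the Novikov algebra (left stability again via $[L(x),\CN]\subseteq\CN$) satisfying $V_j\cdot\Lg\subseteq V_{j+1}$, hence $[V_j,V_j]\subseteq V_{j+1}$ and $\Lg^{(j+1)}\subseteq V_j=0$ for large $j$; the nonzero weights then reduce to this case because each weight is multiplicative, $\lambda(x\cdot y)=\lambda(x)\lambda(y)$ (take traces in $R(x\cdot y)=R(x)R(y)+[L(x),R(y)]$), so its kernel is a codimension-one ideal with nilpotent right multiplications. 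Your proposal contains none of this; with its admitted ``technical heart'' absent, it does not prove the proposition.
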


\begin{proof}
Assume that $\Lg$ admits a Novikov structure given by
$(x,y)\mapsto x\cdot y$. Without loss of generality $k$ is algebraically closed.
Denote by $R(x)$ the right multiplication in the Novikov algebra $A$, i.e., $R(x)(y)=y\cdot x$. 
The algebra $A$ is called right-nilpotent if $R_A=\{R(x) \mid x\in A \}$ satisfies
$R_A^n=0$ for some $n\ge 1$. Let $I,J$ be two right-nilpotent ideals
in $A$. Since $[R(x),R(y)]=0$ the sum $I+J$ is also a right-nilpotent
ideal. Since $A$ is finite-dimensional there exists a largest right-nilpotent
ideal of $A$, denoted by $N(A)$. Now $N(A)$ is a complete left-symmetric
algebra since its right multiplications are nilpotent.
It is known that the Lie algebra of a complete LSA is solvable, see \cite{SEG}.
Hence the Lie algebra $\Lh$ of $N(A)$ is solvable.
On the other hand $A/N(A)$ is a direct sum of fields.
This was proved in \cite{ZEL}. It follows that the Lie algebra of $A/N(A)$ is abelian.
Hence $\Lg/\Lh$ is abelian, and $\Lh$ is solvable. It follows that $\Lg$ is solvable.
\end{proof}

Conversely, a solvable Lie algebra need not admit a Novikov structure in general, as we will
see in proposition $\ref{2.7}$.

\begin{prop}
Let $\Lg$ be a finite-dimensional two-step nilpotent Lie algebra.
Then $\Lg$ admits a Novikov structure.
\end{prop}

\begin{proof}
It is well known that for $x,y\in \Lg$ the formula $x\cdot y=\frac{1}{2}[x,y]$
defines a left-symmetric structure on $\Lg$. It satisfies $x\cdot(y\cdot z)=0$
for all $x,y,z\in \Lg$. Hence the product is also Novikov.
\end{proof}

Here is an example of a $2$-step solvable Lie algebra without a Novikov structure.

\begin{prop}\label{2.7}
The following $2$-step solvable Lie algebra does not admit any Novikov structure: 
let $\Lg$ be the free $4$-step nilpotent Lie algebra on $2$ generators $x_1$
and $x_2$. Let $(x_1,\ldots,x_8)$ be a basis and the Lie brackets as follows:
\begin{align*} 
x_3 & = [x_1,x_2] \\
x_4 & = [x_1,[x_1,x_2]] = [x_1, x_3]\\
x_5 & = [x_2,[x_1,x_2]] = [x_2,x_3] \\
x_6 & = [x_1,[x_1,[x_1,x_2]]] =  [x_1, x_4] \\
x_7 & = [x_2,[x_1,[x_1,x_2]]] = [x_2,x_4] \\
    & = [x_1,[x_2,[x_1,x_2]]] =  [x_1, x_5] \\
x_8 & = [x_2,[x_2,[x_1,x_2]]] =  [x_2,x_5]
\end{align*}
\end{prop}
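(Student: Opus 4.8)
The plan is to prove the non-existence of a Novikov structure by contradiction: assume a Novikov product $x\cdot y$ exists on $\Lg$, and derive enough linear constraints on its structure constants to force a contradiction. The strategy rests on the fact that a Novikov structure is not an arbitrary bilinear map — it must simultaneously satisfy the left-symmetry identity, the right-commutativity $[R(x),R(y)]=0$, and the compatibility $[x,y]=x\cdot y-y\cdot x$ with the given Lie bracket. The key leverage is the pair of Jacobi-like identities established in the Lemma, together with the strong constraint that right multiplications commute.

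First I would exploit the grading. The algebra $\Lg$ is the free $4$-step nilpotent Lie algebra on $x_1,x_2$, so it carries a natural $\N$-grading by word length: $x_1,x_2$ have degree $1$, $x_3$ has degree $2$, $x_4,x_5$ have degree $3$, and $x_6,x_7,x_8$ have degree $4$. The derived series and lower central series are determined by this grading, and in particular $\Lg^5=0$ while $\Lg^4=\langle x_6,x_7,x_8\rangle$ is central. A Novikov product need not respect this grading, but the compatibility relation \eqref{lsa2} forces the antisymmetric part of $x\cdot y$ to equal $\tfrac12$ of the bracket structure, so the only freedom lies in the symmetric part. I would write $x_i\cdot x_j=\tfrac12[x_i,x_j]+s_{ij}$ with $s_{ij}=s_{ji}$ symmetric, and then impose the Novikov identities to pin down the symmetric coefficients $s_{ij}$.

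The main computational step is to apply the Jacobi-like identities and right-commutativity to the generators $x_1,x_2$ and propagate the consequences up the central series. The identity $[x,y]\cdot z+[y,z]\cdot x+[z,x]\cdot y=0$ applied to various triples among $x_1,x_2,x_3,x_4,x_5$ yields linear relations that the products $x_i\cdot x_j$ must satisfy; reading off the top-degree (degree $4$) components $x_6,x_7,x_8$ should produce an overdetermined system. The crucial obstruction is expected to appear at the top of the central series: the three independent degree-$4$ elements $x_6,x_7,x_8$ arise from brackets in two different ways for $x_7$ (the relation $x_7=[x_2,x_4]=[x_1,x_5]$ reflects the single Jacobi relation among length-$4$ words), and this forces a consistency condition that a commuting family of right multiplications cannot meet.

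\textbf{The hard part} will be organizing the large linear system efficiently: with an $8$-dimensional algebra there are up to $64$ products to track, and a naive approach drowns in structure constants. I expect the cleanest route is to observe that right-nilpotency and right-commutativity together make the operators $R(x_1),R(x_2)$ generate a commutative associative subalgebra of $\End(\Lg)$, and that the compatibility \eqref{lsa2} forces $R(x_1),R(x_2)$ to reproduce (up to the symmetric correction) the non-commuting adjoint action $\ad(x_1),\ad(x_2)$ of the free nilpotent Lie algebra. Since $[\ad(x_1),\ad(x_2)]\ne 0$ already at the level of how these operators act on the derived algebra, reconciling a \emph{commuting} pair $R(x_1),R(x_2)$ with the prescribed \emph{non-commuting} bracket on the top graded piece should be impossible; making this incompatibility precise — i.e.\ showing the required symmetric correction $s_{ij}$ cannot absorb the commutator defect — is the heart of the argument and the step I would develop in full detail.
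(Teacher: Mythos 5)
There is a genuine gap: your proposal never actually derives the contradiction, and the step you defer is the entire content of the proposition. Everything past the set-up is phrased as expectation --- the linear system ``should produce an overdetermined system,'' the incompatibility ``should be impossible,'' and the decisive step is ``the step I would develop in full detail.'' Since the statement is a non-existence claim about one specific $8$-dimensional algebra, its proof \emph{is} that finite computation; a plan that stops before the computation establishes nothing. For comparison, the paper's proof parametrizes the unknown product by the left multiplications $L(x_k)$ and observes that the Novikov condition $[R(x),R(y)]=0$, rewritten with the help of $[L(x),L(y)]=L([x,y])$ and $[\ad x,\ad y]=\ad [x,y]$ as
\begin{equation*}
L([x,y])+\ad([x,y])-[\ad(x),L(y)]-[L(x),\ad(y)]=0,
\end{equation*}
is \emph{linear} in the entries of the $L(x_k)$; it then reduces the full system by these linear equations and verifies by computer that what remains is contradictory. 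Your parametrization $x_i\cdot x_j=\tfrac12[x_i,x_j]+s_{ij}$ and the Jacobi-like identities likewise yield linear equations (the brackets are known), so your set-up is a legitimate variant of the same reduction; but without carrying it to an explicit inconsistency, the proof is missing precisely where the paper's proof does its work.

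Worse, the heuristic you offer in place of that computation is not a valid argument. You claim the obstruction is that a \emph{commuting} pair $R(x_1),R(x_2)$ cannot be reconciled with the \emph{non-commuting} operators $\ad(x_1),\ad(x_2)$, i.e.\ that the symmetric correction cannot absorb the commutator defect. But non-commutativity of the adjoint representation is no obstruction by itself: every two-step nilpotent Lie algebra, the free three-step nilpotent algebra on two generators (treated twice in this paper, in Section 3 and in Example \ref{ex-3.5}), and the filiform algebras of Section 2 all have $[\ad(x_1),\ad(x_2)]\neq 0$ and nevertheless admit Novikov structures, so in all those cases the symmetric part \emph{does} absorb the defect. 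Moreover this very algebra $\Lg$ admits a left-symmetric structure (it is positively graded, as the paper notes), so no soft argument at the level of commuting versus non-commuting operators, or of left-symmetry alone, can detect the failure. The obstruction is invisible until one writes out the equations for this particular algebra and checks that they are inconsistent --- which is exactly what the paper's (computer-assisted) proof does and what your proposal leaves undone.
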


\begin{proof}
Suppose that $\Lg$ admits a Novikov product with left multiplications $L(x)$. Then 
the right multiplications satisfy $R(x)= L(x)-\ad (x)$.
Suppose the operators are given by $L(x_k)=(x_{ij}^k)_{i,j}$ for $k=1,\ldots , 8$, with unknowns
$x_{ij}^k$. The operators $L(x)$ must satisfy the following conditions: 
\begin{align*}
[L(x),L(y)] & = L([x,y]), \\
 L([x,y])+\ad([x,y])& -[\ad(x),L(y)]-[L(x),\ad(y)] \\
 & = [L(x)-\ad x, L(y)-\ad y] \\
 & = [R(x),R(y)] \\
 & = 0
\end{align*}
for all $x,y\in \Lg$. These conditions are equivalent to a system of polynomial equations in the 
variables $x_{ij}^k$. In general, it is quite hard to solve such a system. But in our case, the condition
$L([x,y])+\ad([x,y]-[\ad(x),L(y)]-[L(x),\ad(y)]=0$ yields {\it linear} equations in the entries of
the operators $L(x_k)$. They reduce the system of all equations to very few equations -- which are
contradictory. This can be verified by a simple computer calculation. 
Note that $\Lg$ admits an affine structure since it is positively graded.
\end{proof}

\begin{prop}\label{2.11}
Let $\Lg$ be a finite-dimensional Lie algebra which is the direct vector space sum
$\Lg=\La\oplus \Lb$. Let $(e_1,\ldots ,e_n)$ be a basis of $\La$ and 
$(f_1,\ldots ,f_m)$ be a basis of $\Lb$. Suppose that
$[\La,\La]\subseteq \La$, $[\Lg,\Lb]\subseteq \Lb$, $[\Lg,[\La,\La]] =0$ and
$[\Lg,[\Lb,\Lb]] =0$. Then we obtain a Novikov structure on $\Lg$ by
\begin{align*}
e_i\cdot e_j & = \frac{1}{2}[e_i,e_j],\\
e_i\cdot f_j & = [e_i,f_j],\\
f_i\cdot e_j & = 0,\\
f_i\cdot f_j & = \frac{1}{2}[f_i,f_j].
\end{align*}
\end{prop}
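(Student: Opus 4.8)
The plan is to verify the three defining properties of a Novikov structure for the product given by the four formulas: compatibility with the bracket, left-symmetry, and commutativity of the right multiplications. Since the product is bilinear, each identity need only be checked on basis vectors, so I work throughout with elements $a,a'\in\La$ and $b,b'\in\Lb$ and split into the three cases (both arguments in $\La$, one in each, both in $\Lb$).

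First I would record the structural consequences of the hypotheses. From $[\La,\La]\subseteq\La$ the subspace $\La$ is a subalgebra, while $[\Lg,\Lb]\subseteq\Lb$ makes $\Lb$ an ideal; moreover $[\Lg,[\La,\La]]=0$ and $[\Lg,[\Lb,\Lb]]=0$ say precisely that $[\La,\La]$ and $[\Lb,\Lb]$ are central in $\Lg$. Compatibility $[x,y]=x\cdot y-y\cdot x$ then follows at once in each case: for $e_i,e_j$ and for $f_i,f_j$ the factor $\frac{1}{2}$ in the symmetric formulas reproduces the bracket, and for $e_i,f_j$ one has $e_i\cdot f_j-f_j\cdot e_i=[e_i,f_j]-0=[e_i,f_j]$, using $[e_i,f_j]\in\Lb$.

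For left-symmetry I would use the equivalent operator identity $[L(x),L(y)]=L([x,y])$, and for the Novikov condition the identity $[R(x),R(y)]=0$ with $R(x)=L(x)-\ad(x)$. Writing all operators in block form with respect to $\Lg=\La\oplus\Lb$, the left multiplications are the block-diagonal operators $L(a)=\frac{1}{2}\ad(a)|_\La\oplus\ad(a)|_\Lb$ and $L(b)=0\oplus\frac{1}{2}\ad(b)|_\Lb$, whence $R(a)=-\frac{1}{2}\ad(a)|_\La\oplus 0$, while $R(b)$ has image in $\Lb$, acting by $a\mapsto[a,b]$ on $\La$ and by $-\frac{1}{2}\ad(b)|_\Lb$ on $\Lb$. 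The left-symmetry identities then reduce to the Jacobi identity restricted to $\La$ or to $\Lb$ together with the centrality of $[\La,\La]$ and $[\Lb,\Lb]$: in the diagonal cases $L([a,a'])$ and $L([b,b'])$ vanish since the bracket is central, and the corresponding commutators $[L(a),L(a')]$, $[L(b),L(b')]$ vanish for the same reason, while in the mixed case $[L(a),L(b)]=L([a,b])$ amounts to $[\ad(a)|_\Lb,\ad(b)|_\Lb]=\ad([a,b])|_\Lb$, which is Jacobi on the ideal $\Lb$.

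The step I expect to carry the real content is the Novikov condition $[R(x),R(y)]=0$ in the two cases involving $\Lb$, where genuine off-diagonal blocks appear. In the mixed case $[R(a),R(b)]$ collapses to the single block $a'\mapsto[b,[a,a']]$ (up to a scalar), which vanishes exactly because $[a,a']\in[\La,\La]$ is central; in the case of two elements of $\Lb$ the off-diagonal block is $a\mapsto[b,[b',a]]-[b',[b,a]]$, which by the Jacobi identity equals $[[b,b'],a]$ and hence vanishes because $[\Lb,\Lb]$ is central, while the diagonal block $[\ad(b)|_\Lb,\ad(b')|_\Lb]$ vanishes for the same reason. Thus both centrality hypotheses are consumed precisely here, and once these vanishings are checked the product is seen to be Novikov.
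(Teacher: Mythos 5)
Your proposal is correct and takes essentially the same route as the paper, whose proof is simply the remark that ``the proof consists of a direct verification'': your checks of compatibility, of $[L(x),L(y)]=L([x,y])$, and of $[R(x),R(y)]=0$ in block form relative to $\Lg=\La\oplus\Lb$ are exactly that verification, carried out correctly (the centrality of $[\La,\La]$ and $[\Lb,\Lb]$ and the Jacobi identity are invoked in the right places, and the equivalences you use are legitimate since compatibility is established first). In effect you supply the details the paper omits, in a well-organized form.
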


The proof consists of a direct verification.

\begin{cor}\label{2.13}
Let $\Lg$ be a finite-dimensional Lie algebra admitting a $1$-codimensional ideal
$\Lb$ such that $[\Lb,\Lb]\subseteq Z(\Lg)$. Then $\Lg$ admits a Novikov structure.
\end{cor}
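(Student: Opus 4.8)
The plan is to deduce Corollary~\ref{2.13} from Proposition~\ref{2.11} by producing a suitable vector-space decomposition $\Lg = \La \oplus \Lb$ and checking that the four hypotheses of the proposition are met. Since $\Lb$ is a $1$-codimensional ideal, I would choose any vector $e_1 \in \Lg \setminus \Lb$ and set $\La = k\,e_1$, so that $\Lg = \La \oplus \Lb$ as vector spaces with $n = 1$ and $\La$ one-dimensional. The abelian hypothesis $[\La,\La] \subseteq \La$ is then automatic because $[\La,\La] = 0$, and likewise $[\Lg,[\La,\La]] = 0$ holds trivially.

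Next I would verify the two genuinely substantive conditions. Because $\Lb$ is an \emph{ideal} of $\Lg$, we immediately have $[\Lg,\Lb] \subseteq \Lb$, which is the second hypothesis of Proposition~\ref{2.11}. The last hypothesis $[\Lg,[\Lb,\Lb]] = 0$ is exactly where the assumption $[\Lb,\Lb] \subseteq Z(\Lg)$ enters: by definition of the center, every element of $Z(\Lg)$ brackets trivially with all of $\Lg$, so $[\Lg,[\Lb,\Lb]] \subseteq [\Lg, Z(\Lg)] = 0$. Thus all four hypotheses hold, and Proposition~\ref{2.11} supplies an explicit Novikov structure on $\Lg$.

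I expect the only point requiring a moment's care to be the bookkeeping of the decomposition rather than any deep obstacle: one must make sure the complement $\La$ is literally a one-dimensional abelian subalgebra (trivially true) and that the roles of $\La$ and $\Lb$ in the proposition match the data here. In particular the Novikov product specializes to $f_i \cdot e_1 = 0$, $e_1 \cdot f_j = [e_1,f_j]$, $e_1 \cdot e_1 = 0$, and $f_i \cdot f_j = \tfrac12 [f_i,f_j]$, so no new verification is needed beyond confirming the hypotheses. The corollary then follows at once.
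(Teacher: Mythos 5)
Your proof is correct and is essentially identical to the paper's: the paper also takes $\La=\langle e_1\rangle$ to be a one-dimensional complement of $\Lb$ and applies Proposition~\ref{2.11}, leaving the verification of its hypotheses implicit. Your spelled-out checks (ideal property gives $[\Lg,\Lb]\subseteq\Lb$, centrality gives $[\Lg,[\Lb,\Lb]]=0$, and one-dimensionality of $\La$ trivializes the remaining conditions) are exactly the intended ones.
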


\begin{proof}
Let $\La=\langle e_1 \rangle$ and  $\Lb=\langle f_1,\ldots ,f_m \rangle$. Then Proposition $\ref{2.11}$ can
be applied. 
\end{proof}

To give more examples of Novikov structures on Lie algebras we consider filiform Lie algebras of dimensions 
$n\le 7$. We use the classification list of \cite{MAG}. Denote by  $\Lg_{I}(\al)$ with $\al\ne 0$ the
family of filiform Lie algebras defined by
\[
[x_1,x_i] = x_{i+1}; \; 2\le i\le 6;\quad
[x_2,x_3] =x_5; \; [x_2,x_4]=x_6; \; [x_2,x_5]=(1-\al)x_7;\; [x_3,x_4]=\al x_7.
\]

\begin{prop}
All complex filiform Lie algebras of dimension $n\le 7$ different from $\Lg_{I}(\al)$ admit a Novikov structure. 
An algebra $\Lg_{I}(\al)$ admits a Novikov structure if and only if $\al=\frac{1}{10}$.
\end{prop}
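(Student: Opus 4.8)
The plan is to split the statement into its two assertions and treat each relative to the classification of filiform Lie algebras in \cite{MAG}. For the first assertion I would run through the finite list of filiform Lie algebras of dimension $n\le 7$ that are not of the form $\Lg_I(\al)$ and exhibit a Novikov structure on each. Before constructing products by hand I would check which algebras are already covered by the general tools available: a filiform algebra possessing a $1$-codimensional ideal $\Lb$ with $[\Lb,\Lb]\subseteq Z(\Lg)$ falls under Corollary $\ref{2.13}$. For a filiform algebra with standard basis the natural candidate is $\Lb=\langle x_2,\ldots,x_n\rangle$, which is always a $1$-codimensional ideal; whenever the brackets $[x_i,x_j]$ with $i,j\ge 2$ land in the centre, Corollary $\ref{2.13}$ applies at once (this is the generic situation, and it already separates the model filiform algebras from the exceptional family). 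The few remaining algebras from the list I would settle by writing down an explicit product, typically a modification of $x\cdot y=\frac12[x,y]$ in the $x_1$-direction, and verifying the left-symmetric and Novikov identities directly.

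For the second assertion the method of Proposition $\ref{2.7}$ is the right template. I would posit unknown left multiplications $L(x_k)=(x_{ij}^k)$ on $\Lg_I(\al)$ and impose the conditions characterising a Novikov structure: the compatibility $L(x)y-L(y)x=[x,y]$, the representation identity $[L(x),L(y)]=L([x,y])$, and the Novikov condition $[R(x),R(y)]=0$ with $R(x)=L(x)-\ad(x)$. Exactly as in Proposition $\ref{2.7}$, the Novikov condition rewrites as
\[
L([x,y])+\ad([x,y])-[\ad(x),L(y)]-[L(x),\ad(y)]=0,
\]
which, together with the compatibility relation, is \emph{linear} in the entries $x_{ij}^k$. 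These linear equations drastically cut down the free parameters, leaving only the quadratic equations coming from $[L(x),L(y)]=L([x,y])$ to analyse.

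The crux is then to eliminate the surviving variables from the reduced quadratic system and read off the constraint on $\al$. I expect this elimination, most cleanly a Gr\"obner-basis computation treating $\al$ as a parameter, to collapse to a polynomial relation in $\al$ whose only admissible root is $\al=\frac{1}{10}$; this is the step I would delegate to a computer algebra calculation, in the spirit of Proposition $\ref{2.7}$. For $\al\ne\frac{1}{10}$ the system is then inconsistent, which gives nonexistence, while for $\al=\frac{1}{10}$ the same computation returns an explicit solution. To finish the sufficiency direction I would record one admissible family of matrices $L(x_k)$ and check the Novikov axioms directly.

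The main obstacle I anticipate is not the setup but the elimination. The filiform structure of $\Lg_I(\al)$ couples many of the unknowns, so isolating the clean threshold $\al=\frac{1}{10}$ depends on organising the linear reduction carefully: using the grading of $\Lg_I(\al)$ and the known matrices $\ad(x_k)$ to force most entries of the $L(x_k)$ to vanish before the quadratic relations are brought in. Keeping this bookkeeping under control is where the real work lies.
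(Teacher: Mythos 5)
Your proposal is correct and relies on the same essential toolkit as the paper --- Corollary $\ref{2.13}$ for the easy cases, explicit products for the rest, and a computer-assisted analysis of the polynomial system (with the linear reduction trick of Proposition $\ref{2.7}$) for the exceptional family --- but it is organized differently, and the difference matters for the workload. The paper does not walk through the classification list case by case: it puts all $6$-dimensional filiform algebras into a single normal form with parameters $\al_1,\al_2,\al_3$ and writes down one explicit Novikov product valid for every parameter value, then puts all $7$-dimensional ones into a normal form with parameters $\al_1,\ldots,\al_4$ and runs one computation showing that a Novikov structure exists precisely when $\al_1\al_4(10\al_4-\al_1)=0$. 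This single criterion proves both assertions at once: every $7$-dimensional filiform algebra other than $\Lg_{I}(\al)$ admits a presentation whose parameters satisfy the criterion, while $\Lg_{I}(\al)$ corresponds to $\al_1=1$, $\al_2=\al_3=0$, $\al_4=\al$, where the criterion collapses to $\al=\frac{1}{10}$. Your split treatment (list plus a dedicated one-parameter elimination for $\Lg_{I}(\al)$, which is exactly the paper's method in spirit) would also work, but note that Corollary $\ref{2.13}$ covers less than you expect: the centre of a filiform algebra is one-dimensional, so with $\Lb=\langle x_2,\ldots,x_n\rangle$ the hypothesis $[\Lb,\Lb]\subseteq Z(\Lg)$ forces $\al_1=0$ in dimension $6$ and $\al_1=\al_2=0$ in dimension $7$; it is decisive only for $n\le 5$, where the commutator algebra is abelian. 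Consequently the ``few remaining algebras'' are really multi-parameter families with $\al_1\neq 0$, and the required products are not mild modifications of $\tfrac12[x,y]$ (the paper's dimension-$6$ formula needs coefficients such as $-\al_1/3$, $-2\al_1/3$, $\al_2/2$, $\al_3/2$); in practice you would be pushed into the same uniform parametric computation the paper performs, which is precisely what its organization buys.
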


\begin{proof}
The filiform Lie algebras of dimension $n\le 5$ have an abelian commutator algebra.
Hence the claim follows by Corollary $\ref{2.13}$. All $6$-dimensional filiform algebras
can be written as\\
\begin{align*}
[x_1,x_i] & =  x_{i+1}; \; 2\le i\le 5;\\
[x_2,x_3] & = \al_1 x_5+\al_2 x_6; \\
[x_2,x_4] & = \al_1 x_6; \\
[x_2,x_5] & = -\al_3 x_6; \\
[x_3,x_4] & = \al_3x_6;\\
\end{align*}
with parameters $\al_1,\al_2,\al_3\in \C$. It is possible to construct a Novikov structure on these algebras
via $x_i.x_j =\la_{ij}[x_i,x_j]$ for $i\neq j$ with certain scalars $\la_{ij}$ relative to
our basis $(x_1,\ldots ,x_6)$. In general, of course, such a Novikov product need not exist, but
for the above filiform algebras we can find such a structure: \\
\begin{align*}
x_1\cdot x_i & = x_{i+1};\; 2\le i\le 5; & x_2\cdot x_5 & = -\frac{\al_3}{2}x_6; & x_4\cdot x_2 & =  -\frac{\al_1}{3}x_6; \\
x_2\cdot x_2 & = -\frac{\al_1}{3}x_4; & x_3\cdot x_2 & = -\frac{2\al_1}{3}x_5- \frac{\al_2}{2}x_6; & 
x_4\cdot x_3 & =  -\frac{\al_3}{2}x_6;\\
x_2\cdot x_3 & = \frac{\al_1}{3}x_5+ \frac{\al_2}{2}x_6; & x_3\cdot x_3 & =  -\frac{\al_1}{3}x_6; & 
x_5\cdot x_2 & = \frac{\al_3}{2}x_6;\\
x_2\cdot x_4 & = \frac{2\al_1}{3}x_6; & x_3\cdot x_4 & = \frac{\al_3}{2}x_6.\\
\end{align*}
The other products are equal to zero. One may easily verify this by hand, or by computer.
Similarly one can write the $7$-dimensional filiform algebras as\\
\begin{align*}
[x_1,x_i] & =  x_{i+1}; \; 2\le i\le 6;\\
[x_2,x_3] & = \al_1 x_5+\al_2 x_6+\al_3x_7; \\
[x_2,x_4] & = \al_1 x_6+\al_2x_7; \\
[x_2,x_5] & = (\al_1-\al_4) x_7; \\
[x_3,x_4] & = \al_4x_7;\\
\end{align*}
with parameters $\al_1,\al_2,\al_3,\al_4$. By computer calculations it is easy to
see that these algebras admit a Novikov structure if and only if 
\[
\al_1\al_4(10\al_4-\al_1)=0. 
\]
This condition can be always satisfied except for the algebra $\Lg_{I}(\al)$, which is obtained by the 
choice $\al_1=1,\al_2=0,\al_3=0$ and $\al_4=\al$. 
We have a Novikov product on $\Lg_{I}(\al)$, $\al\neq 0$, if and only if  $\al=\frac{1}{10}$.
\end{proof}

\section{Classical r-matrices and Novikov structures}

Let $\Lg$ be a Lie algebra, $\Lu$ be a $\Lg$-module and $T\colon \Lu\ra \Lg$ be a
linear map. Then we make $\Lu$ into an algebra by defining a skew-symmetric product $[,]_T$ by
\begin{align*}
[u,v]_T & = T(u).v-T(v).u. 
\end{align*}

\begin{defi}
The linear operator $T\colon \Lu\ra \Lg$ is called a {\it classical $r$-matrix}, or $T$-operator, if
$T$ is a homomorphism of algebras:
\begin{align}
T([u,v]_T) & = [T(u),T(v)].
\end{align}
This is equivalent to the fact, that $T$ satisfies the classical Yang-Baxter equation (CYBE)
\begin{align}\label{CYBE}
T(T(u).v-T(v).u)=[T(u),T(v)].
\end{align}
\end{defi}

\begin{rem}
Let $\Lu=\Lg$ be the adjoint module. Then we obtain the original definition
of a classical $r$-matrix and the CYBE of \cite{STS}. The CYBE is given in that case by
\begin{align*}
T([T(x),y]+[x,T(y)]) & = [T(x),T(y)].
\end{align*}
\end{rem}

If $T$ is a solution of CYBE, we can use $T$ to construct affine structures on Lie algebras.

\begin{thm}\label{3.3}
Let $\Lg$ be a Lie algebra, $\Lu$ be a $\Lg$-module and $T\colon \Lu\ra \Lg$ be a
linear map satisfying the CYBE. Then the product
\begin{align}\label{r7}
u\kringel v & = T(u).v
\end{align}
is left-symmetric. Hence $\Lu_T=(\Lu,[\, , \, ]_T)$ is a Lie algebra.
Then $\Lu_T$ admits an affine structure, and $T\colon \Lu_T\ra \Lg$
is a Lie algebra homomorphism.
\end{thm}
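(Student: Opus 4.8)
The plan is to verify the left-symmetric identity directly from the definition of the product $u \kringel v = T(u).v$, using the hypothesis that $T$ satisfies the CYBE in the form $T([u,v]_T) = [T(u),T(v)]$. The key observation is that because $\Lu$ is a $\Lg$-module, the module action satisfies $T(u).(T(v).w) - T(v).(T(u).w) = [T(u),T(v)].w$; this is precisely the statement that the action is a Lie algebra homomorphism $\Lg \to \End(\Lu)$. Rewriting the left-symmetrator of $\kringel$ in terms of the module action should let me substitute the CYBE and collapse everything.

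Concretely, I would compute the associator difference
\[
u \kringel (v \kringel w) - (u \kringel v)\kringel w - v \kringel (u \kringel w) + (v \kringel u) \kringel w.
\]
The first and third terms unfold as $T(u).(T(v).w)$ and $T(v).(T(u).w)$, whose difference is $[T(u),T(v)].w$ by the module property. The second and fourth terms become $T(u \kringel v).w = T(T(u).v).w$ and $T(v \kringel u).w = T(T(v).u).w$; here I must remember that $u \kringel v = T(u).v$ is an element of $\Lu$, so $T$ is applied to it. Their difference is $T(T(u).v - T(v).u).w = T([u,v]_T).w$, which by the CYBE equals $[T(u),T(v)].w$. Thus the associator difference is $[T(u),T(v)].w - [T(u),T(v)].w = 0$, establishing left-symmetry.

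Once $\kringel$ is known to be left-symmetric, the commutator $u\kringel v - v\kringel u = T(u).v - T(v).u = [u,v]_T$ reproduces exactly the bracket on $\Lu_T$, so $\kringel$ is an affine structure on the Lie algebra $\Lu_T$; this also shows $[\,,\,]_T$ is a genuine Lie bracket (the Jacobi identity being automatic for the commutator of any left-symmetric product). Finally, that $T\colon \Lu_T \to \Lg$ is a Lie algebra homomorphism is just a restatement of the CYBE hypothesis $T([u,v]_T) = [T(u),T(v)]$.

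The main obstacle, such as it is, is bookkeeping: one must keep straight which operations are the module action of $\Lg$ on $\Lu$, which is the bracket in $\Lg$, and which is the new bracket $[\,,\,]_T$ on $\Lu$, and apply $T$ at the correct stage. The single genuinely substantive input is the module-homomorphism identity $T(u).(T(v).w) - T(v).(T(u).w) = [T(u),T(v)].w$; everything else is a direct substitution of the CYBE, so no serious difficulty is expected.
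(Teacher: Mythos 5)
Your proof is correct and follows essentially the same route as the paper: both compute the difference of the two associators $T(T(u).v).w - T(u).(T(v).w)$ and $T(T(v).u).w - T(v).(T(u).w)$, then cancel it using the module property $[T(u),T(v)].w = T(u).(T(v).w) - T(v).(T(u).w)$ together with the CYBE, and both conclude the Lie bracket and homomorphism claims by the standard facts that the commutator of a left-symmetric product satisfies Jacobi and that the CYBE is exactly the homomorphism condition. The only difference is cosmetic grouping of the four terms.
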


\begin{proof}
We will show that the above product is left-symmetric. Then the commutator 
\[
u\kringel v-v\kringel u =  T(u).v-T(v).u =[u,v]_T
\] 
automatically is a Lie bracket.
Let $u,v,w\in \Lu$. We have
\begin{align*}
(u,v,w) & = (u\kringel v)\kringel w-u\kringel (v\kringel w) = T(T(u).v).w-T(u).(T(v).w) \\
(v,u,w) & = (v \kringel u)\kringel w-v\kringel (u\kringel w) = T(T(v).u).w-T(v).(T(u).w). 
\end{align*}
Using the fact that $\Lu$ is a $\Lg$-module and \eqref{CYBE} we obtain
\begin{align*}
(u,v,w)-(v,u,w) & = T(T(u).v-T(v).u).w - (T(u).T(v)-T(v).T(u)).w\\
 & = (T(T(u).v-T(v).u)-[T(u),T(v)]).w \\
 & = 0.
\end{align*}
\end{proof}

\begin{cor}
Suppose that $T\colon \Lu\ra \Lg$ satisfies the CYBE and the condition 
\begin{align}\label{novbed}
T(T(u).v).w & = T(T(u).w).v
\end{align}
for all $u,v,w\in \Lu$. Then the product \eqref{r7} defines a Novikov structure on
$\Lu_T$.
\end{cor}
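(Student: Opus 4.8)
The plan is to build directly on Theorem~\ref{3.3}, which already establishes that the product $u\kringel v = T(u).v$ of \eqref{r7} is left-symmetric and that $\Lu_T$ is the associated Lie algebra. Since a left-symmetric algebra is Novikov precisely when it additionally satisfies $(x\kringel y)\kringel z = (x\kringel z)\kringel y$ (equivalently, when the right multiplications commute, $[R(x),R(y)]=0$), the only thing left to verify is this one extra identity for the product $\kringel$.

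To check it I would simply expand both sides using the definition $u\kringel v = T(u).v$. For the left-hand side,
\begin{align*}
(u\kringel v)\kringel w & = T(u\kringel v).w = T(T(u).v).w,
\end{align*}
and for the right-hand side,
\begin{align*}
(u\kringel w)\kringel v & = T(u\kringel w).v = T(T(u).w).v.
\end{align*}
Hence the Novikov identity $(u\kringel v)\kringel w = (u\kringel w)\kringel v$ is equivalent to $T(T(u).v).w = T(T(u).w).v$, which is exactly the hypothesis \eqref{novbed}. Together with the left-symmetry already supplied by Theorem~\ref{3.3}, this shows that $\kringel$ defines a Novikov structure on $\Lu_T$.

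There is essentially no obstacle in this argument: all the analytic content is front-loaded, with the CYBE producing left-symmetry via Theorem~\ref{3.3} and the assumed symmetry \eqref{novbed} encoding precisely the commuting of the right multiplications. The only point requiring care is the bookkeeping, namely confirming that the right multiplication in $\Lu_T$ is $R(w)(u) = u\kringel w = T(u).w$, so that the condition $[R(v),R(w)]=0$ translates verbatim into \eqref{novbed}. With that identification the corollary is immediate.
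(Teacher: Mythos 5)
Your proof is correct and is exactly the argument the paper intends: the corollary is stated without proof precisely because, once Theorem~\ref{3.3} gives left-symmetry of $u\kringel v = T(u).v$, the Novikov identity $(u\kringel v)\kringel w = (u\kringel w)\kringel v$ unfolds verbatim into the hypothesis \eqref{novbed}. Your identification of the right multiplication $R(w)(u)=T(u).w$ and the equivalence $[R(v),R(w)]=0$ with \eqref{novbed} is the right bookkeeping, and nothing further is needed.
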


\begin{ex}
Let $\Lg=\Ls\Ll_2(\C)$ with $[x,y]=h$, $[x,h]=-2x$, $[y,h]=2y$
and $\Lu=\Lg$ be the adjoint module. Then
\[
T=
\begin{pmatrix}
0 & 0 & 0 \\
0 & 0 & 0 \\
0 & 0 & 1
\end{pmatrix}
\]
defines a Novikov structure on the solvable
Lie algebra $\Lu_{T} \cong \Lr_{3,-1}(\C)$.
\end{ex}
In fact, the Lie brackets of  $\Lu_{T}$ are given by
\begin{align*}
[x,h]_{T} & = -2x \\
[y,h]_{T} & = 2y.
\end{align*}

\begin{ex}\label{3.6}
Let $\Lg=\Ls\Ll_2(\C)$ and $\Lu$ be the natural $\Lg$-module
with basis $(v_0,v_1)$, i.e., with

\begin{align*}
x.v_0 & = 0,\quad y.v_0=v_1, \quad h.v_0=v_0 \\
x.v_1 & = v_0 \quad y.v_1 = 0,\quad h.v_1=-v_1
\end{align*}
Then, for all $c_1,c_2 \in \C$,
\[
T=
\begin{pmatrix}
0 & 0  \\
c_2 & c_1  \\
c_1 & 0
\end{pmatrix}
\]
defines a Novikov structure on $\Lu_{T}$.
\end{ex}

We have $T(v_0)= c_2y + c_1h$ and $T(v_1) = c_1y$, so that
\begin{align*}
[v_0,v_1]_{T} & = T(v_0).v_1-T(v_1).v_0 = -2c_1v_1.
\end{align*}
Hence we have $\Lu_{T}\cong \C^2$ for $c_1=0$ and
$\Lu_{T}\cong \Lr_2(\C)$ otherwise.

\begin{prop}
Let $\phi\colon \Lu'\ra \Lu$ be a $\Lg$-module homomorphism. If
$T\colon \Lu\ra \Lg$ satisfies the CYBE then so does $T^{\prime}\colon \Lu'\ra \Lg$,
where $T'=T\phi$. Moreover $\phi$ induces a Lie algebra homomorphism $\ov{\phi}\colon \Lu'_{T^{\prime}}\ra \Lu_{T}$.
In particular, if $\phi$ is a module isomorphism, $\Lu'_{T^{\prime}}\cong \Lu_{T}$ as Lie algebras.
\end{prop}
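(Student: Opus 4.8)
The plan is to verify three things in order: first that $T' = T\phi$ satisfies the CYBE, second that $\bar\phi := \phi$ (viewed as a map of the underlying vector spaces) is a Lie algebra homomorphism $\Lu'_{T'} \to \Lu_T$, and third that it is an isomorphism when $\phi$ is. The key structural fact I would lean on throughout is that $\phi$ is a $\Lg$-module map, so $\phi(g.u') = g.\phi(u')$ for all $g \in \Lg$ and $u' \in \Lu'$; combined with $T' = T\phi$ this lets me push every expression from $\Lu'$ into $\Lu$.

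\medskip

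\noindent\emph{Step 1 (CYBE for $T'$).} I would start from the bracket on $\Lu'$ induced by $T'$, namely $[u',v']_{T'} = T'(u').v' - T'(v').u' = T(\phi(u')).v' - T(\phi(v')).u'$. Applying $\phi$ and using that $\phi$ is a module homomorphism, I expect
\begin{align*}
\phi([u',v']_{T'}) & = T(\phi(u')).\phi(v') - T(\phi(v')).\phi(u') = [\phi(u'),\phi(v')]_T.
\end{align*}
Then $T'([u',v']_{T'}) = T(\phi([u',v']_{T'})) = T([\phi(u'),\phi(v')]_T)$, and since $T$ satisfies the CYBE this equals $[T(\phi(u')),T(\phi(v'))] = [T'(u'),T'(v')]$. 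This both establishes the CYBE for $T'$ and, as a byproduct, shows the identity $\phi([u',v']_{T'}) = [\phi(u'),\phi(v')]_T$ that makes $\bar\phi$ a Lie homomorphism from $\Lu'_{T'}$ to $\Lu_T$.

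\medskip

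\noindent\emph{Steps 2 and 3 (homomorphism and isomorphism).} The homomorphism property is exactly the displayed bracket identity from Step 1, so no separate argument is needed. For the final claim, if $\phi$ is a module isomorphism then $\bar\phi$ is a linear bijection that is already a Lie homomorphism, hence a Lie algebra isomorphism $\Lu'_{T'} \cong \Lu_T$; one can note that its inverse $\phi^{-1}$ is again a module map, so the reverse direction is symmetric.

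\medskip

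\noindent I do not anticipate a genuine obstacle here: the whole argument is a formal manipulation that hinges on commuting $\phi$ past the module action. The one point requiring care is the single intermediate equality $\phi([u',v']_{T'}) = [\phi(u'),\phi(v')]_T$, since it must hold \emph{before} invoking the CYBE for $T$ and is what ties the two modules together; everything else follows by substitution.
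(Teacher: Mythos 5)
Your proof is correct and follows essentially the same route as the paper: both arguments hinge on pushing $\phi$ through the module action ($\phi(x.u') = x.\phi(u')$) and then invoking the CYBE for $T$ on the elements $\phi(u'), \phi(v') \in \Lu$. The only difference is organizational --- you establish the bracket identity $\phi([u',v']_{T'}) = [\phi(u'),\phi(v')]_T$ first and derive the CYBE for $T'$ as a consequence, whereas the paper verifies the CYBE by direct computation and then repeats the same manipulation separately for the homomorphism property, so your ordering merely avoids a small duplication.
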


\begin{proof}
We have $\phi(x.u)=x.\phi(u)$ for $x\in \Lg$ and $u\in \Lu'$.
Since $T$ satisfies CYBE it follows
\begin{align*}
T^{\prime} (T^{\prime} (u).v-T^{\prime}(v).u) & = T\phi (T(\phi(u)).v-T(\phi(v)).u) \\
 & = T(T(\phi(u)).\phi(v)-T(\phi(v)).\phi(u)) \\
 & = [T(\phi(u)),T(\phi(v))] \\
 & = [T^{\prime} (u), T^{\prime} (v)].
\end{align*}
In other words, $T^{\prime}$ also satisfies CYBE. Hence $(\Lu,[\, , \, ]_T)$ and $(\Lu',[\, , \, ]_{T^{\prime}})$
are Lie algebras, and
\begin{align*}
\phi([u,v]_{T^{\prime}}) & = \phi(T\phi(u).v-T\phi(v).u) \\
 & = T\phi(u).\phi(v)-T\phi(v).\phi(u) \\
 & = [\phi(u),\phi(v)]_T.
\end{align*}
\end{proof}

Let $\Lh$ be a Lie algebra and $(\Lg,\Lu,T)$ be a triple
consisting of a Lie algebra $\Lg$, a $\Lg$-module $\Lu$ and a linear map
$T\colon \Lu\ra \Lg$ satisfying the CYBE. We call  $(\Lg,\Lu,T)$ a {\it Yang-Baxter-triple} for $\Lh$,
if $\Lh \cong \Lu_T$ by Theorem $\ref{3.3}$. The question is, for which Lie algebras
$\Lh$ exists a Yang-Baxter-triple. Let $A$ be a given LSA with underlying Lie algebra $\Lg$.
The left-multiplication in $A$ defines a $\Lg$-module $\Lg_L$. Let $\id\colon \Lg_L\ra \Lg$ be the
the identity map. Then $T=\id$ satisfies the CYBE: 
\begin{align*}
\id(\id (x).y-\id(y).x ) & = x.y-y.x =[x,y]=[\id(x),\id(y)]. 
\end{align*}
Hence $(\Lg,\Lg_L,\id)$ is a Yang-Baxter-triple for $\Lg$, i.e., any affine structure on a Lie algebra
arises from a Yang-Baxter triple.
The following proposition shows how to construct Yang-Baxter-triples in certain cases.

\begin{prop}
Let $\Lg$ be a Lie algebra with basis $(x_1,\ldots ,x_n)$, $\Lu$ be a $\Lg$-module with basis
$(u_1,\ldots ,u_n)$. Fix $\ell,k\in \{ 1,\ldots ,m \}$ to define a linear map
$T\colon \Lu\ra \Lg$ by
\begin{align*}
T(u_i)& =
\begin{cases}
x_k, & \text{if $i=\ell$},\\
0  , & \text{if $i\neq \ell$}.
\end{cases}
\end{align*}
Assume that $T(x_k.u_j) = 0 \text{ for all } j=1,\ldots ,m$. Then $T$ satisfies \eqref{CYBE} and \eqref{novbed}, 
and hence defines a Novikov structure on $\Lu_T$ by $u_i\kringel u_j = T(u_i).u_j$. The Lie bracket on 
$\Lu_T$ is given by
\begin{align*}
[u_i, u_j]_T & = 
\begin{cases}
x_k.u_j, & \text{if $i=\ell, j\neq \ell$},\\
-x_k.u_i, & \text{if $i\neq \ell, j=\ell$},\\
0  & \text{otherwise}.
\end{cases}
\end{align*}
\end{prop}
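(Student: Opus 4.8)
The plan is to establish both required identities---the CYBE \eqref{CYBE} and the Novikov condition \eqref{novbed}---by a short case analysis on basis vectors, and then to invoke the Corollary following Theorem \ref{3.3}. The whole argument rests on the observation that $T$ is supported on the single vector $u_\ell$, together with the standing hypothesis $T(x_k.u_j)=0$.

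For the CYBE, both sides of \eqref{CYBE} are bilinear in $(u,v)$, so it suffices to check the identity on pairs $(u_i,u_j)$ of basis vectors. If neither $i$ nor $j$ equals $\ell$, then $T(u_i)=T(u_j)=0$ and both sides vanish; if $i=j=\ell$, then the right side is $[x_k,x_k]=0$ and the left side is $T(0)=0$. The only substantial case is $i=\ell$, $j\neq\ell$ (together with its skew-symmetric mirror $i\neq\ell$, $j=\ell$): here the right side equals $[x_k,0]=0$, while the left side is $T(x_k.u_j)$, which is $0$ by hypothesis. Hence \eqref{CYBE} holds.

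For the Novikov condition \eqref{novbed}, I would reduce similarly to basis vectors $u_i,u_j,u_p$ using trilinearity. If $i\neq\ell$, then $T(u_i)=0$ forces both sides to be $0$. If $i=\ell$, then $T(u_\ell).u_j=x_k.u_j$ lies in $\ker T$ by the hypothesis $T(x_k.u_j)=0$, so the left side $T(x_k.u_j).u_p$ equals $0$; the right side $T(x_k.u_p).u_j$ vanishes for the same reason. Thus \eqref{novbed} holds, with both sides identically zero.

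Once \eqref{CYBE} and \eqref{novbed} are verified, the Corollary following Theorem \ref{3.3} immediately gives the Novikov structure $u\kringel v=T(u).v$ on $\Lu_T$. The bracket formula is then obtained by evaluating the definition $[u_i,u_j]_T=T(u_i).u_j-T(u_j).u_i$ in the same case split: it equals $x_k.u_j$ when $i=\ell,\,j\neq\ell$, equals $-x_k.u_i$ when $i\neq\ell,\,j=\ell$, and vanishes otherwise. I do not expect a genuine obstacle here; the rank-one form of $T$ and the kernel hypothesis collapse every relevant expression to zero, and the only care needed is correct bookkeeping of the cases and invoking $T(x_k.u_j)=0$ at the right moment.
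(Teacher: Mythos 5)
Your proof is correct and follows essentially the same route as the paper's: a case analysis on basis vectors exploiting the rank-one form of $T$ and the hypothesis $T(x_k.u_j)=0$ to verify \eqref{CYBE} and \eqref{novbed}, then invoking the corollary to Theorem \ref{3.3}. The only difference is cosmetic bookkeeping (you split cases by $i$ and $j$, the paper splits by $j$ first), so nothing further is needed.
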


\begin{proof}
We have to verify the following two conditions: 
\begin{align*}
T(T(u_i).u_j-T(u_j).u_i) & = [T(u_i),T(u_j)], \\
T(T(u_i).u_j).u_r & = T(T(u_i).u_r).u_j.
\end{align*}
For the first identity, suppose that $j\neq \ell$. Then $T(u_j)=0$. Since $T(u_i)=x_k$ for $i=\ell$,
$T(u_i)=0$ for $i\neq \ell$ and  $T(x_k.u_j)=0$ by assumption, we have 
$T(T(u_i).u_j)=0$, and the claim follows. Now suppose that
$j=\ell$. We have to show that $T(T(u_i).u_{\ell})=[T(u_i),x_k]$. For $i=\ell$ both sides are equal to zero.
If $i\neq \ell$, then $T(u_i)=0$ and the equation reduces to $T(-x_k.u_i)=0$, which is true by assumption.\\
To show the second identity, assume that $i=\ell$. Then both sides are equal to zero. The other case,
$i\neq \ell$ is also clear.
\end{proof}

\begin{ex}
Let $\Lg=\Ls\Ll_2(\C)$ and $\Lu=V(2)$ be the natural module, see example $\ref{3.6}$. 
We change the notation according to the above proposition:
let $(x_1,x_2,x_3)=(x,y,h)$ be the basis of $\Lg$ and $(u_1,u_2)$ be the basis of $\Lu$.
Fix $\ell=2, k=1$, i.e., $T(u_1)=0$ and $T(u_2)=x_1$.
Then $T(x_k.u_j)=T(x_1.u_j)=0$ for $j=1,2$, and 
\[
T=
\begin{pmatrix}
0 & 1  \\
0 & 0  \\
0 & 0
\end{pmatrix}
\]
defines a Novikov structure on $\Lu_{T}\cong \C^2$.
\end{ex}

We want to mention also the following well known construction, see \cite{DOG}.

\begin{prop}\label{3.10}
Let $(A,.)$ be an associative, commutative algebra and $D$ a derivation of $A$, i.e.,
satisfying $D(x.y)=D(x).y + x.D(y)$. Then the product $x\kringel y=x.D(y)$ is Novikov.
In particular, it defines a Novikov structure on the Lie algebra $\Lg$ given by
\[
[x,y]:=x\kringel y-y\kringel x = x.D(y)-y.D(x).
\]
\end{prop}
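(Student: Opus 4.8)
The plan is to verify directly that the product $x\kringel y = x.D(y)$ satisfies both the left-symmetry identity and the right-commutativity identity, exploiting commutativity and associativity of $(A,.)$ together with the Leibniz rule for $D$. Since the final sentence about the Lie algebra $\Lg$ follows immediately once we know $(A,\kringel)$ is Novikov (every Novikov algebra is an LSA, hence Lie-admissible, and the displayed bracket is by definition the commutator of $\kringel$), the real content is the single statement that $\kringel$ is Novikov.

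First I would check right-commutativity, which I expect to be the easier of the two. Using commutativity of the associative product, I compute
\begin{align*}
(x\kringel y)\kringel z & = (x.D(y)).D(z) = x.D(y).D(z), \\
(x\kringel z)\kringel y & = (x.D(z)).D(y) = x.D(z).D(y),
\end{align*}
and these agree because $.$ is commutative and associative. So $(x\kringel y)\kringel z = (x\kringel z)\kringel y$ holds with no use of the Leibniz rule at all.

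Next I would verify left-symmetry, which is where the derivation property enters and which I expect to be the main (though still routine) obstacle. Here I compute the associator-type expression
\begin{align*}
x\kringel(y\kringel z) & = x.D\bigl(y.D(z)\bigr) = x.\bigl(D(y).D(z) + y.D^2(z)\bigr), \\
(x\kringel y)\kringel z & = \bigl(x.D(y)\bigr).D(z) = x.D(y).D(z),
\end{align*}
so that $x\kringel(y\kringel z) - (x\kringel y)\kringel z = x.y.D^2(z)$. This last expression is symmetric in $x$ and $y$ by commutativity of $.$, hence it equals $y\kringel(x\kringel z) - (y\kringel x)\kringel z$ by the same computation with $x,y$ interchanged. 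This is exactly the left-symmetric identity, so $(A,\kringel)$ is an LSA, and combined with the right-commutativity above it is Novikov. The final assertion then follows by Definition $\ref{affine}$, since $\kringel$ is a Novikov product whose commutator is the prescribed bracket on $\Lg$.
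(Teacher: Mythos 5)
Your proof is correct. Note, however, that the paper itself contains no proof of this proposition: it is stated as a ``well known construction'' with a reference to \cite{DOG}, so there is nothing in the paper to compare your argument against --- your direct verification supplies exactly the details the paper delegates to the literature. Two features of your write-up are worth keeping. First, you correctly observe that the Novikov condition $(x\kringel y)\kringel z=(x\kringel z)\kringel y$ uses only commutativity and associativity of $(A,.)$, since both sides reduce to $x.D(y).D(z)$; the derivation property plays no role there. Second, your treatment of left-symmetry is cleanly packaged by the single identity
\begin{equation*}
x\kringel(y\kringel z)-(x\kringel y)\kringel z = x.y.D^{2}(z),
\end{equation*}
whose right-hand side is manifestly symmetric in $x$ and $y$, which is precisely the left-symmetric identity in the form the paper uses. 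Together these give that $(A,\kringel)$ is Novikov, and the final claim about $\Lg$ is then immediate, as you say, because every Novikov algebra is left-symmetric and hence Lie-admissible, with the displayed bracket being the commutator of $\kringel$.
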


Note that $D\in\Der (A)$ implies $D\in \Der (\Lg)$:
\begin{align*}
D([x,y]) & = D(x.D(y)-y.D(x)) \\
 & = D(x).D(y)+x.D^2(y)-D(y).D(x)-y.D^2(y) \\
 & = [D(x),y]+[x,D(y)].
\end{align*}

\begin{ex}
The free $3$-step nilpotent Lie algebra $\Lg$ of dimension $5$ with brackets
$[x_1,x_2]=x_3$, $[x_1,x_3]=x_4$ and $[x_2,x_3]=x_5$ admits a Novikov structure by
proposition $\ref{3.10}$.
\end{ex}

Define the commutative and associative product for $A$ by
\begin{align*}
x_1.x_1 & = x_1+x_2, & x_2.x_1 & = x_2, & x_3.x_1 & = x_3,& x_4.x_1 & = x_4+ x_5/2, & x_5.x_1 & = x_5,\\
x_1.x_2 & = x_2, & x_2.x_2 & = 0,& x_3.x_2 & = 0, & x_4.x_2 & = x_5/2, & x_5.x_2 & = 0, \\
x_1.x_3 & = x_3, & x_2.x_3 & = 0,& x_3.x_3 & = -x_5/2, & x_4.x_3 & = 0,& x_5.x_3 & = 0, \\
x_1.x_4 & = x_4+x_5/2, & x_2.x_4 & = x_5/2, & x_3.x_4 & = 0,& x_4.x_4 & = 0,& x_5.x_4 & = 0, \\
x_1.x_5 & = x_5, & x_2.x_5 & = 0, & x_3.x_5 & = 0,& x_4.x_5 & = 0,& x_5.x_5 & = 0.
\end{align*}

Then choose $D\in \Der (A)$ as
\[
D=
\begin{pmatrix}
0 & 0 & 0 & 0 & 0 \\
0 & 0 & 0 & 0 & 0 \\
1 & 1 & 0 & 0 & 0 \\
0 & 0 & 1 & 0 & 0 \\
0 & 0 & -1 & 0 & 0 
\end{pmatrix}.
\]
Then $x_i.D(x_j)=x_i\kringel x_j$ is a Novikov structure on $\Lg$ with
$x_1\kringel x_1=x_3$, $x_1\kringel x_2=x_3$, $x_1\kringel x_3=x_4-x_5/2$, $x_2\kringel x_3=x_5/2$,
$x_3\kringel x_1=-x_5/2$, $x_3\kringel x_2=-x_5/2$.

\section{Construction of Novikov structures via extensions}

In the following we will consider Lie algebras $\Lg$ which are an extension
of a Lie algebra $\Lb$ by an abelian Lie algebra $\La$. Hence we have a
short exact sequence of Lie algebras
\begin{equation*}
0 \rightarrow \La \xrightarrow{\iota} \Lg \xrightarrow{\pi}
\Lb \rightarrow 0
\end{equation*}
Since $\La$ is abelian, there exists a natural $\Lb$-module structure
on $\La$. We denote the action of $\Lb$ on $\La$ by $(x,a)\mapsto \phi (x)a$,
where $\phi \colon \Lb \ra \End (\La)$ is the corresponding Lie algebra
representation. We have
\begin{equation}
\phi([x,y])=\phi(x)\phi(y)-\phi(y)\phi(x)\label{5}
\end{equation}
for all $x,y \in \Lb$.
Let $\Om \in Z^2(\Lb,\La)$ be a $2$-cocycle. This means that
$\Om : \Lb \times \Lb \ra \La$ is a skew-symmetric bilinear map
satisfying
\begin{equation}\label{6}
\phi(x)\Om(y,z)-\phi(y)\Om(x,z)+\phi(z)\Om (x,y) =\Om ([x,y],z)-\Om ([x,z],y)+\Om([y,z],x).
\end{equation}
We obtain a Lie bracket on $\Lg=\La \times \Lb$ by
 \begin{equation}\label{lie-algebra}
[(a,x),(b,y)]:=(\phi (x)b-\phi(y)a+\Om(x,y),[x,y])
\end{equation}
for $a,b\in \La$ and $x,y\in \Lb$.
As a special case we obtain the $2$-step solvable Lie algebras as extensions
of two abelian Lie algebras $\La$ and $\Lb$. 
Now we want to construct affine structures on Lie algebras $\Lg$ which are
an extension of a Lie algebra $\Lb$ by an abelian Lie algebra $\La$.
Assume that $\Lg=(\La,\Lb,\phi,\Om)$ is an extension with the above data.
Suppose that we have already an LSA-product  $(a,b)\mapsto a\cdot b$ on
$\La$ and an LSA-product $(x,y)\mapsto x\cdot y$ on $\Lb$.
Since  $\La$ is abelian, the LSA-product on $\La$ is commutative and
associative. Hence it defines also a Novikov structure on $\La$.
We want to lift these  LSA-products to $\Lg$. Consider
\begin{align*}
\om & \colon \Lb \times \Lb \ra \La\\
\phi_1,\, \phi_2 & \colon \Lb \ra\End (\La)
\end{align*}
where $\om$ is a bilinear map and 
$\phi_1,\, \phi_2$ are Lie algebra representations.  
We will define a bilinear product $\Lg \times \Lg \ra \Lg$ by
\begin{equation}\label{produkt}
(a,x)\kringel (b,y):=(a\cdot b+\phi_1(y)a+\phi_2 (x)b+\om(x,y),x\cdot y)
\end{equation}  
It is straightforward to check the conditions for this product to be left-symmetric
or Novikov, see \cite{DBU}. The result is:

\begin{prop}\label{phi12}
The above product defines a left-symmetric structure on $\Lg$ 
if and only if the following conditions hold:
\begin{align}
\om(x,y)-\om(y,x) & = \Om(x,y) \label{8}\\
\phi_2(x)-\phi_1(x) & = \phi(x)\label{9}\\
\phi_2(x)\om(y,z)- \phi_2(y)\om(x,z) - \phi_1(z)\Om(x,y) & =
\om(y,x\cdot z)-\om (x,y\cdot z) +\om ([x,y],z)\label{10}\\
a\cdot \om(y,z)+\phi_1 (y\cdot z)a & = \phi_2(y)\phi_1(z)a-\phi_1(z)\phi(y)a \label{11} \\
a\cdot (\phi_1(z)b) & = b\cdot (\phi_1(z)a) \label{12}\\
\phi_2(y)(a\cdot c)- a\cdot (\phi_2(y)c) & = (\phi(y)a)\cdot c \label{13} \\
\Om(x,y)\cdot c & =0\label{14}
\end{align}
for all $a,b,c \in \La$ and  $x,y,z \in \Lb$.
\end{prop}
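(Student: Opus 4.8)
The plan is to expand both requirements that make \eqref{produkt} an affine structure on $\Lg=\La\times\Lb$ in the sense of Definition \ref{affine} --- a left-symmetric product whose commutator is the prescribed bracket \eqref{lie-algebra} --- and to read off the seven conditions by splitting the resulting identities into homogeneous pieces. I write $u=(a,x)$, $v=(b,y)$, $w=(c,z)$ throughout. There are two things to check: that the commutator of $\kringel$ reproduces \eqref{lie-algebra}, and that $\kringel$ is left-symmetric.

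First I would treat the commutator relation \eqref{lsa2}, i.e. $u\kringel v-v\kringel u=[u,v]$. Expanding \eqref{produkt} and using that $\La$ is abelian (so the lifted product on $\La$ is commutative, $a\cdot b=b\cdot a$) and that the product on $\Lb$ is an LSA (so $x\cdot y-y\cdot x=[x,y]$), the $\Lb$-component matches automatically, while the $\La$-component becomes $(\phi_2(x)-\phi_1(x))b-(\phi_2(y)-\phi_1(y))a+\om(x,y)-\om(y,x)$. Comparing with the first component $\phi(x)b-\phi(y)a+\Om(x,y)$ of \eqref{lie-algebra}, and using that $a,b,x,y$ are arbitrary, yields exactly \eqref{9} from the $\phi$-terms and \eqref{8} from the $\om$-terms.

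Next I would verify left-symmetry, i.e. that the associator $(u,v,w)=(u\kringel v)\kringel w-u\kringel(v\kringel w)$ is symmetric in $u,v$. Its $\Lb$-component is the associator of the LSA on $\Lb$, hence already symmetric, so only the $\La$-component matters. Expanding that component with \eqref{produkt} produces a sum of terms, each of which is linear in a subset of the free $\La$-variables $a,b,c$. The key step is to collect the difference $(u,v,w)-(v,u,w)$ according to which of $a,b,c$ occur and to set each homogeneous piece to zero. The part trilinear in $a,b,c$ cancels by associativity of the product on $\La$; the $a,b$-part gives \eqref{12} (after the $\phi_1(z)(a\cdot b)$ terms cancel by commutativity); the $a,c$-part, combined with its $b,c$-partner under the swap, gives \eqref{13}; the part linear in $a$, together with its $b$-partner, gives \eqref{11}; the part linear in $c$ gives \eqref{14}; and the part with no free $\La$-variable gives \eqref{10}.

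The main obstacle is the bookkeeping, and in particular that several of the conditions only attain their stated form after the commutator relations \eqref{8} and \eqref{9} are fed back in. For instance, the $a,c$-part produces $((\phi_2(y)-\phi_1(y))a)\cdot c$, which one rewrites as $(\phi(y)a)\cdot c$ via \eqref{9} to get \eqref{13}; the $c$-part produces $(\om(x,y)-\om(y,x))\cdot c$ together with $\phi_2(x)\phi_2(y)c-\phi_2(y)\phi_2(x)c-\phi_2(x\cdot y-y\cdot x)c$, which vanishes since $\phi_2$ is a representation and $x\cdot y-y\cdot x=[x,y]$, so that \eqref{8} turns the remainder into \eqref{14}; and the $\om$-only part uses both \eqref{8} and $x\cdot y-y\cdot x=[x,y]$ to assume the cocycle-type form \eqref{10}. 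Throughout one must track signs and the order of arguments carefully (for example distinguishing $\om(y,x\cdot z)$ from $\om(x,y\cdot z)$), which is where the verification is most error-prone; this is precisely the routine but lengthy computation deferred to \cite{DBU}.
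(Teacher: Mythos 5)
Your proposal is correct: the homogeneous-component bookkeeping checks out (I verified each piece — the trilinear part vanishes by commutativity/associativity on $\La$, the $a,b$-part gives \eqref{12}, the $a,c$- and $b,c$-parts give \eqref{13}, the $a$- and $b$-parts give \eqref{11}, the $c$-part gives \eqref{14} using \eqref{8} and the fact that $\phi_2$ is a representation, and the $\om$-only part gives \eqref{10}), and feeding \eqref{8}, \eqref{9} back in is exactly the right logical structure for the ``only if'' direction. This is precisely the direct verification that the paper itself declares ``straightforward'' and defers to \cite{DBU}, so your argument matches the intended proof.
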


\begin{prop}\label{novikov}
The product \eqref{produkt} defines a Novikov structure on $\Lg$ if and
only if the conditions for a left-symmetric structure are satisfied, and
in addition the following conditions hold:
\begin{align}
\phi_1(z)\om(x,y)-\phi_1(y)\om(x,z) & = \om(x\cdot z,y)-\om(x\cdot y,z) \label{15}\\
\om(x,y)\cdot c+\phi_2(x\cdot y)c & = \phi_1(y)\phi_2(x)c \label{16}\\
[\phi_1(x),\phi_1(y)] & = 0 \label{17}\\
(\phi_2(x)b)\cdot c & = (\phi_2(x)c)\cdot b \label{18}\\
\phi_1(z)(a\cdot b) & = (\phi_1(z)a)\cdot b \label{19}\\
(x\cdot y)\cdot z & = (x\cdot z)\cdot y \label{20} 
\end{align}
\end{prop}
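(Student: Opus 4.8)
The plan is to use the standard fact that a left-symmetric algebra is Novikov precisely when the additional identity $(X\kringel Y)\kringel Z = (X\kringel Z)\kringel Y$ holds for all $X,Y,Z$ (equivalently, the right multiplications commute). Since Proposition \ref{phi12} already characterizes when the product \eqref{produkt} is left-symmetric, it suffices to show that, writing $X=(a,x)$, $Y=(b,y)$, $Z=(c,z)$ with $a,b,c\in\La$ and $x,y,z\in\Lb$, this Novikov identity is equivalent to the six conditions \eqref{15}--\eqref{20}.

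First I would apply the definition \eqref{produkt} twice to expand both sides. The $\Lb$-component of $(X\kringel Y)\kringel Z$ is $(x\cdot y)\cdot z$ and that of $(X\kringel Z)\kringel Y$ is $(x\cdot z)\cdot y$, so equality of the $\Lb$-components is exactly condition \eqref{20}. The real content lies in the $\La$-component, which expands into ten $\La$-valued terms on each side, coming from the inner factors $a\cdot b$, $\phi_1(y)a$, $\phi_2(x)b$, $\om(x,y)$ together with the outer application of \eqref{produkt}.

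The key organizing idea is to separate the $\La$-valued identity according to its multidegree in the independent arguments $(a,b,c)\in\La^3$: since every term is homogeneous in $a,b,c$ and these range freely over the vector space $\La$ (the base field having characteristic zero, hence infinite), the identity holds for all $a,b,c$ if and only if it holds in each of the eight multidegree classes separately. The multidegree $(1,1,1)$ part reads $(a\cdot b)\cdot c=(a\cdot c)\cdot b$, which holds automatically because the product on the abelian algebra $\La$ is commutative and associative. Matching the remaining classes then yields precisely the stated conditions: the part involving only $a$ gives \eqref{17}; the parts involving only $b$ or only $c$ each give \eqref{16}; the bidegrees $(1,1,0)$ and $(1,0,1)$ (involving $a$ together with one of $b,c$) both give \eqref{19}; the bidegree $(0,1,1)$ part gives \eqref{18}; and the part of degree zero in $(a,b,c)$ gives \eqref{15}.

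Since this expansion never invokes the left-symmetry conditions \eqref{8}--\eqref{14}, the Novikov identity is by itself equivalent to \eqref{15}--\eqref{20}, and combining this with Proposition \ref{phi12} gives the proposition. I expect the main obstacle to be organizational rather than conceptual: correctly expanding the doubly nested products via \eqref{produkt} and then carefully sorting the resulting $\La$-terms into the eight multidegree classes so that each one lands on its intended condition. Once the bookkeeping is set up, each class reduces to a one-line comparison, exactly in the spirit of the direct verification underlying Proposition \ref{phi12}.
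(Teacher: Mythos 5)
Your proof is correct: expanding the Novikov identity $(X\kringel Y)\kringel Z=(X\kringel Z)\kringel Y$ via \eqref{produkt} and sorting the $\La$-component by multidegree in $(a,b,c)$ yields exactly \eqref{15}--\eqref{20} (plus \eqref{20} from the $\Lb$-component), with the trilinear class automatic since the LSA-product on the abelian algebra $\La$ is commutative and associative. This is precisely the direct term-by-term verification the paper intends (it defers the computation to \cite{DBU}); your multidegree bookkeeping is simply a clean way of organizing that same check.
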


If $\Lb$ is also abelian, then the Lie algebra $\Lg$ is two-step solvable.
We say that the LSA-products on $\La$ and $\Lb$ are trivial if
$a\cdot b=x\cdot y=0$ for all $a,b\in \La$ and $x,y\in \Lb$.

\begin{cor}\label{trivial}
Suppose that the LSA-products on $\La$ and $\Lb$ are trivial.
Hence $\Lb$ is also abelian.
Then \eqref{produkt} defines a left-symmetric structure
on $\Lg$ if and only if the following conditions hold:

\begin{align*}
\om(x,y)-\om(y,x) & = \Om(x,y)\\
\phi_2(x)-\phi_1(x) & = \phi(x)\\
\phi_2(x)\om(y,z)- \phi_2(y)\om(x,z)& = \phi_1(z)\Om(x,y)\\
[\phi_1(x),\phi_2(y)] & = \phi_1(x)\phi_1(y)
\end{align*}

It defines a Novikov structure on $\Lg$ if in addition
\begin{align*}
\phi_1(z)\om(x,y) & = \phi_1(y)\om(x,z)\\
\phi_1(x)\phi_2(y) & = 0\\
[\phi_1(x),\phi_1(y)] & = 0
\end{align*}
\end{cor}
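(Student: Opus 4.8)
**The plan is to specialize the general left-symmetric conditions of Proposition \ref{phi12} and the additional Novikov conditions of Proposition \ref{novikov} to the case where both LSA-products vanish, then simplify.** Since $a\cdot b = 0$ for all $a,b\in\La$ and $x\cdot y = 0$ for all $x,y\in\Lb$, the statement reduces to a bookkeeping exercise: I substitute these trivial products into the seven left-symmetric equations \eqref{8}--\eqref{14} and the six Novikov equations \eqref{15}--\eqref{20}, and record which ones collapse, which simplify, and which remain.

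First I would handle the left-symmetric conditions. Equations \eqref{8} and \eqref{9} contain no $\cdot$-products and so survive unchanged. In \eqref{10}, the right-hand side terms $\om(y,x\cdot z)$ and $\om(x,y\cdot z)$ vanish because $x\cdot z = y\cdot z = 0$, leaving the displayed third equation $\phi_2(x)\om(y,z)-\phi_2(y)\om(x,z)=\phi_1(z)\Om(x,y)$. Equation \eqref{14} becomes $\Om(x,y)\cdot c = 0$, which holds automatically since $\La$ has trivial product; similarly \eqref{12}, \eqref{13} each have one side killed by triviality. The key point is \eqref{11}: its left-hand term $a\cdot\om(y,z)$ vanishes and $y\cdot z = 0$, so it reads $\phi_1(0)a = \phi_2(y)\phi_1(z)a - \phi_1(z)\phi(y)a$, i.e. $0 = \phi_2(y)\phi_1(z)-\phi_1(z)\phi(y)$. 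Using \eqref{9} to write $\phi = \phi_2 - \phi_1$, this rearranges to $\phi_1(y)\phi_1(z) = \phi_2(y)\phi_1(z)-\phi_1(z)\phi_2(y) = [\phi_2(y),\phi_1(z)]$ after relabeling, which (up to the sign/index conventions one must track carefully) is exactly the fourth displayed equation $[\phi_1(x),\phi_2(y)] = \phi_1(x)\phi_1(y)$.

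Next I would treat the additional Novikov conditions. Equation \eqref{15} has its right-hand side $\om(x\cdot z,y)-\om(x\cdot y,z)$ killed by triviality, yielding $\phi_1(z)\om(x,y) = \phi_1(y)\om(x,z)$, the first extra equation. Equation \eqref{16} becomes $0 + \phi_2(0)c = \phi_1(y)\phi_2(x)c$, i.e. $\phi_1(y)\phi_2(x) = 0$, which (after relabeling indices) is the second extra equation $\phi_1(x)\phi_2(y)=0$. Equation \eqref{17} is $[\phi_1(x),\phi_1(y)]=0$ and survives verbatim as the third. Finally \eqref{18}, \eqref{19}, \eqref{20} all become trivial: \eqref{18} reads $(\phi_2(x)b)\cdot c=(\phi_2(x)c)\cdot b$ with both sides in the image of the zero product on $\La$, \eqref{19} has both sides zero, and \eqref{20} is $0=0$.

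The main obstacle I anticipate is purely notational rather than conceptual: correctly tracking the index names and signs when deducing the fourth left-symmetric equation from \eqref{11}, since one must invoke \eqref{9} to eliminate $\phi$ and then recognize the resulting bracket expression in the stated normalized form. A secondary subtlety is confirming that \eqref{11}, once reduced, is genuinely equivalent to the stated condition and not merely implied by it, and likewise that none of the collapsed equations secretly impose a further constraint that should have been retained. Once these identifications are checked, the corollary follows immediately from Propositions \ref{phi12} and \ref{novikov} by restriction to the trivial-product case.
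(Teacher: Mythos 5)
Your proposal is correct and is exactly the paper's (implicit) argument: the corollary is stated without a separate proof precisely because it is the direct specialization of Propositions \ref{phi12} and \ref{novikov} to trivial products, which is what you carry out, and your reduction of each of \eqref{8}--\eqref{20} (including extracting $[\phi_1(x),\phi_2(y)]=\phi_1(x)\phi_1(y)$ from \eqref{11} via \eqref{9}) reaches the stated conditions. One small bookkeeping point: in reducing \eqref{10} you must also observe that the term $\om([x,y],z)$ vanishes because $[x,y]=0$ in the abelian algebra $\Lb$, not only the terms $\om(y,x\cdot z)$ and $\om(x,y\cdot z)$.
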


In particular, if $\phi_1=0$, the product defines a Novikov
structure on $\Lg$ if and only if it defines a left-symmetric structure
on $\Lg$.

\begin{cor}
Assume that $\Lg= \La \rtimes_{\phi} \Lb$ is a semidirect product of an abelian
Lie algebra $\La$ and a Lie algebra $\Lb$ by a representation 
$\phi\colon \Lb \ra \End(\La)=\Der(\La)$. This yields a split exact sequence
\begin{equation*}
0 \rightarrow \La \xrightarrow{\iota} \Lg \xrightarrow{\pi}
\Lb \rightarrow 0.
\end{equation*}
If $\Lb$ admits an LSA-product then $\Lg$ also admits an LSA-product.
If $\Lb$ admits a Novikov product $(x,y)\mapsto x\cdot y$ such that
$\phi(x\cdot y)=0$ for all $x,y\in \Lb$ then also $\Lg$ admits a Novikov product.
\end{cor}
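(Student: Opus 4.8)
The plan is to exhibit $\Lg=\La\rtimes_\phi\Lb$ as the special case of the extension set-up above with vanishing cocycle, and then to feed a careful choice of auxiliary data into Propositions~\ref{phi12} and~\ref{novikov}. Since the sequence splits I would set $\Om=0$. On the abelian algebra $\La$ I would take the trivial LSA-product $a\cdot b=0$; being commutative and associative it is in particular Novikov, so it is an admissible input. For the lifting data I would choose
\[
\phi_1=0,\qquad \phi_2=\phi,\qquad \om=0 ,
\]
under which the product \eqref{produkt} becomes $(a,x)\kringel(b,y)=(\phi(x)b,\,x\cdot y)$, with $x\cdot y$ the given product on $\Lb$. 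Its commutator is then $(\phi(x)b-\phi(y)a,[x,y])$, the bracket of the semidirect product, which is a first indication that the choice is the right one (and is in any case forced by conditions \eqref{8} and \eqref{9}).

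For the LSA-assertion I would run through the seven conditions \eqref{8}--\eqref{14} of Proposition~\ref{phi12}. Condition \eqref{9} is our choice $\phi_2=\phi$, while the remaining six all collapse to $0=0$: every term in \eqref{8}, \eqref{10}, \eqref{11}, \eqref{12}, \eqref{13}, \eqref{14} carries a factor of $\om$, $\Om$, $\phi_1$, or of the trivial product on $\La$, each of which vanishes. In particular \eqref{10}, the only condition mentioning the $\Lb$-product, holds for \emph{any} left-symmetric product on $\Lb$; so no extra hypothesis is needed and $\Lg$ inherits an LSA-structure.

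For the Novikov assertion I would, in addition, verify the six conditions \eqref{15}--\eqref{20} of Proposition~\ref{novikov}. Conditions \eqref{15}, \eqref{17}, \eqref{18}, \eqref{19} again reduce to $0=0$ because $\om$, $\phi_1$ and the $\La$-product vanish, and \eqref{20} is exactly the Novikov identity for the $\Lb$-product, which is assumed. The only condition carrying content is \eqref{16}: with the present data it reads $\phi(x\cdot y)c=0$ for all $c\in\La$, equivalently $\phi(x\cdot y)=0$. This is precisely the hypothesis, and it is not merely sufficient but forced, so I expect \eqref{16} to be the single real obstacle, and it is exactly what the extra assumption removes.

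Finally I would note that this refines the observation after Corollary~\ref{trivial}: there the $\Lb$-product is also trivial, so \eqref{16} and \eqref{20} are vacuous and Novikov coincides with left-symmetry when $\phi_1=0$; here the nontrivial $\Lb$-product turns \eqref{16} and \eqref{20} into the only genuine demands, discharged respectively by the assumption $\phi(x\cdot y)=0$ and by the Novikov property of the $\Lb$-product.
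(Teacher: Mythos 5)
Your proposal is correct and follows essentially the same route as the paper: split sequence gives $\Om=0$, then choose the trivial product on $\La$, $\phi_1=0$, $\om=0$, $\phi_2=\phi$, and observe that all conditions of Propositions~\ref{phi12} and~\ref{novikov} collapse except \eqref{16}, which is exactly the hypothesis $\phi(x\cdot y)=0$, and \eqref{20}, which is the assumed Novikov identity on $\Lb$. Your additional remarks (that the choice is forced by \eqref{8} and \eqref{9}, and that \eqref{16} is the single genuine obstruction) are accurate refinements of the paper's terser verification.
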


\begin{proof}
Because the short exact sequence is split, the $2$-cocycle $\Om$ in the Lie bracket
of $\Lg$ is trivial, i.e., $\Om(x,y)=0$. Let $a\cdot b=0$ be the trivial product
on $\La$ and take $\phi_1=0$, $\om(x,y)=0$. Assume that $(x,y)\mapsto x\cdot y$ is
an LSA-product. Then all conditions of Proposition $\ref{phi12}$ are satisfied.
Hence \eqref{produkt} defines an LSA-product on $\Lg$, given by
\[
(a,x)\kringel (b,y)=(\phi(x)b, x\cdot y)
\]
Assume that the product on $\Lb$ is Novikov. Then the product on $\Lg$ will be Novikov
if and only if the conditions of Proposition $\ref{novikov}$ are satisfied.
In this case, only condition \eqref{16} remains, which says $\phi(x\cdot y)=0$. 
\end{proof}

\begin{ex}\label{ex-3.5}
Let $\Lg$ be the $5$-dimensional Lie algebra with basis $(A,B,C,X,Y)$
and Lie brackets $[X,Y]=A,\; [X,A]=B,\; [Y,A]=C$. This is a free 
$3$-step nilpotent Lie algebra. Hence it is $2$-step solvable.
The product \eqref{produkt} defines a Novikov structure on $\Lg$ with 

\begin{align*}
\phi_1(X) & = \begin{pmatrix} 0 & 0 & 0\\ -1/2 & 0 & 0\\ 0 & 0 & 0 \end{pmatrix},\quad
\phi_1(Y)=\begin{pmatrix} 0 & 0 & 0\\ 0 & 0 & 0\\ 0 & 0 & 0 \end{pmatrix} \\
\phi_2(X) & = \begin{pmatrix} 0 & 0 & 0\\ 1/2 & 0 & 0\\ 0 & 0 & 0 \end{pmatrix},\quad
\phi_2(Y)= \begin{pmatrix} 0 & 0 & 0\\ 0 & 0 & 0\\ 1 & 0 & 0 \end{pmatrix} \\
\end{align*}
and $\om (Y,X)=-A,\; \om(X,X)= \om(X,Y)= \om(Y,Y)=0$, and trivial products on
$\La$ and $\Lb$.
\end{ex}

The Lie algebra $\Lg$ is given by
$\Lg=(\La,\Lb,\phi,\Om)$ with $\La=\s \{A,B,C\},\; \Lb=\s \{X,Y\}$ abelian,
$\Om(X,Y)=A$ and $\phi(X)A=B,\;\phi(Y)A=C $, i.e.,
$$
\phi(X) = \begin{pmatrix} 0 & 0 & 0\\ 1 & 0 & 0\\ 0 & 0 & 0 \end{pmatrix},\quad
\phi(Y) = \begin{pmatrix} 0 & 0 & 0\\ 0 & 0 & 0\\ 1 & 0 & 0 \end{pmatrix}
$$

It is easy to see that the conditions of corollary $\ref{trivial}$ are satisfied.
Note that any product $\phi_i(x)\phi_j(y)=0$ for $1\le i,j\le 2$ and
$x,y\in \Lb$. We can write down the resulting Novikov structure on
$\Lg$ explicitely. It is given by \eqref{produkt}; the non-zero
products are given by \\
\begin{align*}
A\kringel X & = -B/2, \quad X\kringel A = B/2, \quad Y\kringel A = C, \quad Y\kringel X = -A.\\
\end{align*}

\begin{prop}\label{iso}
Let $\Lg=(\La,\Lb,\phi,\Om)$ be a two-step solvable Lie algebra.
If there exists an $e\in \Lb$ such that $\phi(e)\in\End(\La)$ is an
isomorphism, then $\Lg$ admits a Novikov structure. In fact, in that
case \eqref{produkt} defines a Novikov product, where $\phi_1=0,
\, \phi_2=\phi$, the product on $\La$ and $\Lb$ is trivial, and
\begin{align*}
\om(x,y) & = \phi(e)^{-1}\phi(x)\Om(e,y)
\end{align*}
\end{prop}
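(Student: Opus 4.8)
The plan is to invoke Corollary \ref{trivial} directly. Since $\Lg$ is two-step solvable, both $\La$ and $\Lb$ are abelian, so I take the LSA-products on $\La$ and $\Lb$ to be trivial and set $\phi_1=0$, $\phi_2=\phi$. The first observation I would record is that the abelianness of $\Lb$ forces the operators $\{\phi(x)\mid x\in\Lb\}$ to commute: from \eqref{5} and $[x,y]=0$ we get $[\phi(x),\phi(y)]=\phi([x,y])=0$. In particular $\phi(e)$ commutes with every $\phi(x)$, and hence so does $\phi(e)^{-1}$.

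With $\phi_1=0$ and $\phi_2=\phi$, most of the conditions listed in Corollary \ref{trivial} collapse to $0=0$: the equation $\phi_2(x)-\phi_1(x)=\phi(x)$ holds trivially, the bracket condition $[\phi_1(x),\phi_2(y)]=\phi_1(x)\phi_1(y)$ reads $0=0$, and all three additional Novikov conditions $\phi_1(z)\om(x,y)=\phi_1(y)\om(x,z)$, $\phi_1(x)\phi_2(y)=0$, and $[\phi_1(x),\phi_1(y)]=0$ are automatic. Thus only two genuine conditions remain to be checked:
\begin{align*}
\om(x,y)-\om(y,x) & = \Om(x,y),\\
\phi(x)\om(y,z) & = \phi(y)\om(x,z).
\end{align*}

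For the second condition I would substitute $\om(x,y)=\phi(e)^{-1}\phi(x)\Om(e,y)$ and push $\phi(e)^{-1}$ to the left using the commutativity just established; both sides then equal $\phi(e)^{-1}\phi(x)\phi(y)\Om(e,z)$, since $\phi(x)$ and $\phi(y)$ themselves commute. For the first condition I would use the cocycle identity \eqref{6}: because $\Lb$ is abelian its entire right-hand side vanishes, so setting $z=e$ and applying skew-symmetry of $\Om$ yields $\phi(e)\Om(x,y)=\phi(x)\Om(e,y)-\phi(y)\Om(e,x)$. Multiplying on the left by $\phi(e)^{-1}$ turns this precisely into $\om(x,y)-\om(y,x)=\Om(x,y)$.

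The main obstacle, if there is one, is the bookkeeping in the first condition: one must remember that $\Om$ is skew-symmetric and that the whole right-hand side of \eqref{6} drops out because $\Lb$ is abelian; the specialization $z=e$ is exactly what produces the factor $\phi(e)$ that the inverse in the definition of $\om$ is designed to cancel. Everything else is routine once the commutativity of the $\phi(x)$ is in hand, and Corollary \ref{trivial} then delivers the Novikov structure.
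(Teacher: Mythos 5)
Your proposal is correct and follows essentially the same route as the paper: verify the two nontrivial conditions of Corollary \ref{trivial} (the cocycle identity \eqref{6} with $z=e$, the right-hand side vanishing since $\Lb$ is abelian, gives $\om(x,y)-\om(y,x)=\Om(x,y)$; commutativity of the operators $\phi(x)$ gives $\phi(x)\om(y,z)=\phi(y)\om(x,z)$), and then conclude Novikov from $\phi_1=0$. Your write-up is in fact slightly more explicit than the paper's, which leaves the vanishing of the right-hand side of \eqref{6} and the commutation of $\phi(e)^{-1}$ with the $\phi(x)$ implicit.
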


\begin{proof}
We have to show that the conditions of corollary $\ref{trivial}$ are satisfied.
Applying $\phi(e)^{-1}$ to \eqref{6} with $z=e$ it follows
$\Om(x,y)-\phi(e)^{-1}\phi(x)\Om(e,y)+\phi(e)^{-1}\phi(y)\Om(e,x)=0$. This
just means that $\Om(x,y)=\om(x,y)-\om(y,x)$.
Furthermore we have
\begin{align*}
\phi(x)\om(y,z)-\phi(y)\om(x,z) & = \phi(x)\phi(e)^{-1}\phi(y)\Om(e,z)-
\phi(y)\phi(e)^{-1}\phi(x)\Om(e,z) \\
 & = \phi(e)^{-1}(\phi(x)\phi(y)-\phi(y)\phi(x))\Om(e,z)\\
 & = 0
\end{align*}
Hence the product defines a left-symmetric structure.
Since $\phi_1=0$ the structure is also Novikov.
\end{proof}

We note that we can obtain by proposition $\ref{phi12}$
a new proof of Scheuneman's result (see \cite{SCH}):

\begin{prop}\label{scheuneman}
Any three-step nilpotent Lie algebra admits an LSA structure.
\end{prop}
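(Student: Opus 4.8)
The plan is to realize a three-step nilpotent Lie algebra $\Lg$ as an extension of an abelian Lie algebra by an abelian one, and then feed it into Proposition \ref{phi12} in its trivial-product form, Corollary \ref{trivial}. Concretely, I would set $\La=[\Lg,\Lg]=\Lg^2$ and $\Lb=\Lg/\Lg^2$. Since $[\Lg^2,\Lg^2]\subseteq\Lg^4=0$ the ideal $\La$ is abelian, and $\Lb$ is abelian as the abelianization; thus $\Lg=(\La,\Lb,\phi,\Om)$ is a two-step solvable extension with $\Lb$ abelian, where $\phi(x)$ is the adjoint action of a lift of $x$ on $\Lg^2$. First I would record the two structural facts that three-step nilpotency hands us. The action satisfies $\phi(x)\phi(y)=0$ for all $x,y\in\Lb$, because $\phi(x)\phi(y)a$ lies in $[\Lg,[\Lg,\Lg^2]]\subseteq[\Lg,\Lg^3]\subseteq\Lg^4=0$. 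Moreover, since $\Lb$ is abelian, the $2$-cocycle identity \eqref{6} collapses to
\begin{equation*}
\phi(x)\Om(y,z)-\phi(y)\Om(x,z)+\phi(z)\Om(x,y)=0 .
\end{equation*}

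Next I would take the trivial LSA-products on $\La$ and $\Lb$ and look for a lift of the scalar form $\phi_1=\la_1\phi$, $\phi_2=\la_2\phi$, $\om=\frac{1}{2}\Om$, so that \eqref{produkt} becomes a candidate left-symmetric product. Because $\phi(x)\phi(y)=0$, any scalar multiple of $\phi$ is a representation of the abelian $\Lb$, and the commutator condition $[\phi_1(x),\phi_2(y)]=\phi_1(x)\phi_1(y)$ of Corollary \ref{trivial} holds automatically, both sides vanishing. The antisymmetry requirement $\om(x,y)-\om(y,x)=\Om(x,y)$ holds by the choice $\om=\frac{1}{2}\Om$. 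This leaves only $\phi_2(x)-\phi_1(x)=\phi(x)$ and the mixed condition $\phi_2(x)\om(y,z)-\phi_2(y)\om(x,z)=\phi_1(z)\Om(x,y)$; substituting the reduced cocycle identity turns the latter into the single scalar relation $-\frac{\la_2}{2}=\la_1$, which together with $\la_2-\la_1=1$ forces $\la_1=-\frac{1}{3}$, $\la_2=\frac{2}{3}$.

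The step I expect to be the crux is precisely this mixed condition \eqref{10}: it is the only place where $\om$ and $\Om$ must be reconciled, and it becomes solvable exactly because the reduced form of \eqref{6} is available; the remaining conditions \eqref{12}, \eqref{13} and \eqref{14} are vacuous under the trivial product on $\La$, so no further work is needed. I would then conclude by checking that with $\phi_1=-\frac{1}{3}\phi$, $\phi_2=\frac{2}{3}\phi$ and $\om=\frac{1}{2}\Om$ all hypotheses of Corollary \ref{trivial} are met, so \eqref{produkt} defines an LSA-structure on $\Lg$; unwinding the extension, this is the intrinsic product $x\cdot y=\frac{1}{2}[x,y]-\frac{1}{6}\big([\rho(x),y]+[\rho(y),x]\big)$ for any linear projection $\rho\colon\Lg\to\Lg^2$. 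I expect the genuinely delicate points to be the choice of $\La=[\Lg,\Lg]$ rather than $\La=\Lg^3$ — so that both kernel and quotient are abelian and $\phi(x)\phi(y)=0$ becomes available — and the recognition that the collapsed cocycle identity is exactly the ingredient needed to solve the mixed condition.
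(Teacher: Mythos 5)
Your proof is correct and follows exactly the route the paper indicates: it realizes $\Lg$ as an extension of the abelian algebra $\Lb=\Lg/\Lg^2$ by the abelian ideal $\La=\Lg^2$ and applies Proposition \ref{phi12} in its trivial-product form (Corollary \ref{trivial}), which is precisely the approach the paper gestures at while deferring the details to \cite{DBU}. Your explicit choices $\phi_1=-\tfrac{1}{3}\phi$, $\phi_2=\tfrac{2}{3}\phi$, $\om=\tfrac{1}{2}\Om$ do verify all the required conditions, since three-step nilpotency gives both $\phi(x)\phi(y)=0$ and the collapsed cocycle identity, and the unwound product $x\cdot y=\tfrac{1}{2}[x,y]-\tfrac{1}{6}\bigl([\rho(x),y]+[\rho(y),x]\bigr)$ is a valid closed form of the resulting structure.
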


More generally we have shown \cite{DBU}:

\begin{prop}
Let $\Lg$ be a 2-step solvable Lie algebra with 
$\Lg^r=\Lg^4$ for all $r\ge 5$. Then $\Lg$ admits a complete left symmetric structure.
\end{prop}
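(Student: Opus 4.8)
The plan is to present $\Lg$ as an extension of abelian Lie algebras, split its module by a Fitting decomposition into a \emph{nilpotent} and an \emph{invertible} part, apply Scheuneman's theorem on the first and Proposition \ref{iso} on the second, and glue the two via the lifting construction of Proposition \ref{phi12}.

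Since $\Lg$ is $2$-step solvable, $\La:=[\Lg,\Lg]$ is abelian and $\Lb:=\Lg/\La$ is abelian, so $\Lg=(\La,\Lb,\phi,\Om)$ is an extension as in \eqref{lie-algebra}. Because $\Lb$ is abelian, a direct computation of the lower central series gives $\Lg^2=\La$ and $\Lg^{k+2}=\phi(\Lb)^k\La$ for $k\ge 1$, where $\phi(\Lb)^k$ is the span of $k$-fold products of the commuting operators $\phi(x)$, $x\in\Lb$. Hence the hypothesis $\Lg^5=\Lg^4$ reads $\phi(\Lb)^3\La=\phi(\Lb)^2\La$. Using $\operatorname{char}k=0$ I would take the Fitting decomposition $\La=\La_0\oplus\La_1$ associated with the commutative algebra generated by $\phi(\Lb)$ (concretely, the Fitting decomposition of $\phi(e)$ for a generic $e\in\Lb$): both summands are $\Lb$-submodules since $\phi(\Lb)$ commutes, and $\phi(\Lb)$ acts nilpotently on $\La_0$ and invertibly on $\La_1$. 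As the chain $\phi(\Lb)^k\La_0$ strictly descends to $0$, the stabilization hypothesis is \emph{equivalent} to $\phi(\Lb)^2\La_0=0$, and then $\La_1=\phi(\Lb)^2\La=\Lg^4$; moreover, avoiding finitely many hyperplanes, $e$ can be chosen so that $\phi(e)|_{\La_1}$ is invertible. Correspondingly $\Om=\Om_0\oplus\Om_1$ with $\Om_i\in Z^2(\Lb,\La_i)$, so that $\Lg_i=(\La_i,\Lb,\phi|_{\La_i},\Om_i)$ are Lie algebras.

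I would then choose the lifting data $\phi_1$, $\phi_2=\phi_1+\phi$, $\om$ of \eqref{produkt} block-diagonally with respect to $\La=\La_0\oplus\La_1$, with trivial products on $\La$ and $\Lb$, so that by Corollary \ref{trivial} it suffices to satisfy the four conditions listed there on each block separately. On $\La_1$ I set $\phi_1=0$, $\phi_2=\phi$ and $\om(x,y)=\phi(e)^{-1}\phi(x)\Om_1(e,y)$; this is precisely Proposition \ref{iso} applied to the invertible sub-extension $\Lg_1$, its verification resting on the cocycle identity \eqref{6} with $z=e$. On $\La_0$ the algebra $\Lg_0$ is $3$-step nilpotent, since its lower central series terminates at $\phi(\Lb)^2\La_0=0$, so Scheuneman's theorem, Proposition \ref{scheuneman}, provides a left-symmetric lift; here one must in addition arrange that $\phi_1$ is built from the nilpotent action $\phi|_{\La_0}$ and is therefore nilpotent. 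Because $\phi,\phi_1,\phi_2$ preserve the two blocks while $\om,\Om$ are block-valued, the conditions of Corollary \ref{trivial} decompose into their $\La_0$- and $\La_1$-components, each already met, so \eqref{produkt} defines a left-symmetric product on all of $\Lg$.

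Finally, for \emph{completeness} I would observe that, with trivial products on $\La$ and $\Lb$, the right multiplications satisfy $R(b,y)(a,x)=(\phi_1(y)a+\phi_2(x)b+\om(x,y),0)$ and $R(b',y')(c,0)=(\phi_1(y')c,0)$, so an $N$-fold product of right multiplications is governed on $\La$ by $\phi_1(y_N)\cdots\phi_1(y_2)$. As $\phi_1=0$ on $\La_1$ and $\phi_1$ is nilpotent on $\La_0$, the associative algebra generated by $\phi_1(\Lb)$ is nilpotent, whence $R_A^N=0$ for large $N$ and the structure is complete. The main obstacle is the $\La_0$-block: one needs not merely that a left-symmetric lift exists (Scheuneman) but that it can be chosen with $\phi_1$ nilpotent, so as to force right-nilpotency, and that the coupling condition relating $\om$ to $\Om$ (the one nontrivial equation of Corollary \ref{trivial}) remains solvable for such a choice; verifying this compatibility via \eqref{6} is the technical heart, and the details are those of \cite{DBU}.
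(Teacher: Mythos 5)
Your overall architecture is sound, and the paper itself contains no proof of this proposition at all --- it is stated with a pointer to \cite{DBU} --- so the comparison must be with the argument intended there, which does proceed essentially as you outline: write $\Lg=(\La,\Lb,\phi,\Om)$ with $\La=[\Lg,\Lg]$ and $\Lb$ abelian, split $\La=\La_0\oplus\La_1$ into the Fitting components of the commuting family $\phi(\Lb)$, observe that the hypothesis forces $\phi(x)\phi(y)\La_0=0$, treat $\La_1$ by Proposition \ref{iso}, and verify the conditions of Corollary \ref{trivial} blockwise. Your reductions (the formula $\Lg^{k+2}=\phi(\Lb)^k\La$, the equivalence of $\Lg^5=\Lg^4$ with $\phi(\Lb)^2\La_0=0$, the identification $\La_1=\Lg^4$, the blockwise decomposition of the conditions, and the completeness criterion through nilpotency of the algebra generated by $\phi_1(\Lb)$) are all correct.

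The genuine gap is the $\La_0$ block. Corollary \ref{trivial} needs lifting data $(\phi_1,\phi_2,\om_0)$ for the \emph{specific} extension $(\La_0,\Lb,\phi|_{\La_0},\Om_0)$ with trivial products on $\La_0$ and on $\Lb$ (the $\Lb$-product is shared by both blocks, so it cannot be nontrivial on one block only). Proposition \ref{scheuneman}, invoked as a black box, only yields an LSA structure on the abstract Lie algebra $\Lg_0$; nothing guarantees that this structure has the form \eqref{produkt}, makes $\La_0$ a two-sided ideal with trivial induced products, or has $\phi_1$ nilpotent --- and these are exactly the properties your gluing and your completeness argument consume. You acknowledge this (``the technical heart'') but then defer it to \cite{DBU}, which leaves the proof incomplete precisely at its crux. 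The fix is to bypass Scheuneman entirely with an explicit choice: on $\La_0$ take
\begin{equation*}
\phi_1=-\tfrac{1}{3}\,\phi|_{\La_0},\qquad \phi_2=\tfrac{2}{3}\,\phi|_{\La_0},\qquad \om_0=\tfrac{1}{2}\,\Om_0 .
\end{equation*}
Then $\phi_2-\phi_1=\phi$ and $\om_0(x,y)-\om_0(y,x)=\Om_0(x,y)$ are immediate; the condition $[\phi_1(x),\phi_2(y)]=\phi_1(x)\phi_1(y)$ reduces to $\phi(x)\phi(y)|_{\La_0}=0$; and the remaining condition $\phi_2(x)\om_0(y,z)-\phi_2(y)\om_0(x,z)=\phi_1(z)\Om_0(x,y)$ becomes, after inserting the scalars, exactly the cocycle identity \eqref{6} for abelian $\Lb$, namely $\phi(x)\Om_0(y,z)-\phi(y)\Om_0(x,z)=-\phi(z)\Om_0(x,y)$. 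With this choice $\phi_1(x)\phi_1(y)=0$ on all of $\La$ (it equals $\tfrac{1}{9}\phi(x)\phi(y)=0$ on $\La_0$ and $0$ on $\La_1$), so every threefold product of right multiplications vanishes and the structure is complete. A secondary, smaller point: defining $\La_0,\La_1$ as the Fitting decomposition of a single generic $\phi(e)$ needs justification, since for a non-generic $e$ the family $\phi(\Lb)$ need not act nilpotently on the Fitting-zero part of $\phi(e)$; it is cleaner to set $\La_0=\{a\in\La:\phi(\Lb)^N a=0 \text{ for } N\gg 0\}$ and $\La_1=\bigcap_N\phi(\Lb)^N\La$, check complementarity using commutativity and finite-dimensionality, and only then choose $e$ generic so that $\phi(e)|_{\La_1}$ is invertible, as Proposition \ref{iso} requires.
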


It is natural to ask whether any $3$-step nilpotent Lie algebra admits a
Novikov structure. We have shown in \cite{DBU} that all $3$-step nilpotent Lie algebras with
$2$ or $3$ generators admit a Novikov structure. 
In general however, a Novikov structure may not exist on a $3$-step nilpotent Lie algebra.

\end{document}